\documentclass[a4paper,twoside,11pt,reqno]{amsart}
\usepackage{graphicx}
\usepackage{multicol}
\usepackage{nomencl}
\usepackage{geometry}
\usepackage{t1enc}	

\def\ncla#1{\index{#1}}
\makeindex
\usepackage[utf8]{inputenc}

\usepackage{amsmath}
\usepackage{amsfonts}
\usepackage{amssymb} 
\usepackage{color,euscript,graphicx,tabularx,epsfig,enumerate}
\usepackage{graphicx}
\usepackage{version}
\geometry{ left=3cm , right=3cm}
\def\h1{\hspace{1cm}}
\def\h2{\hspace{2cm}}\def\h3{\hspace{3cm}}
\def\h4{\hspace{4cm}}\def\h5{\hspace{5cm}}
 \definecolor{grey}{rgb}{0.75,0.75,0.75}
\definecolor{orange}{rgb}{1.0,0.5,0.5}
\definecolor{brown}{rgb}{0.5,0.25,0.0}
\definecolor{pink}{rgb}{1.0,0.5,0.5}

\def\dis{\displaystyle}
 
\setcounter{tocdepth}{1}
\def\paragraph#1{\textit{#1}.}
\def\C{{\mathbb C}}

\newtheorem{tm}{Theorem}
\newtheorem{pp}{Proposition}

\newtheorem{lm}{Lemma}
\newtheorem{cl}{Corollary}
\newtheorem{defi}{Definition}
\newtheorem{req}{Remark}


\newcommand{\faitdisparaitre}[1]{}

 \def\cqfd{\unskip\kern 6pt\penalty 500
\raise -2pt\hbox{\vrule\vbox to5pt{\hrule width 4pt
\vfill\hrule}\vrule}}

\def\Ker{\textsf{\,Ker\,}}

 \def\dfl{\textsf{\,dfl\,}}

\def\A{\mathbf{A}}
\def\A2{{\mathbf{A}}^2}

\def\AB0{{\A}^2_{B( {0} ,\,{1})}}
\def\AB#1#2{{\A}^2_{B( {#1} ,\,{#2})}}

\def\D{\mathbf{D}}

\def\C{\mathbf{C}}

\def\E{\mathbf{E}}
\def\F{\mathbf{F}}

\def\mE{\mathcal{E}}

\def\binomial#1#2{{{#1}\choose{#2}}}

\def\monomials#1{\mathcal{M}}

\def\rank{\mathrm{rank}}

\def\eval{\, eval\, }
\def\schur{\, Schur\, }
\def\vect{\, vec\,}

\def\dis{\displaystyle}

\def\scr{\scriptstyle}


\title[Numerical approximation of multiple isolated roots]{Numerical approximation of multiple isolated roots of analytic
  systems}
\author{M.~Giusti}
\address{Marc Giusti \\Laboratoire LIX  \\ Campus de l'\'Ecole
  Polytechnique \\
1 rue Honor\'e d'Estienne d'Orves\\B\^atiment Alan
Turing\\CS35003\\91120 Palaiseau\\ France.} 
\email{Marc.Giusti@Polytechnique.fr} 
 \author{J.-C.~Yakoubsohn} 
\address{Jean-Claude Yakoubsohn \\ Institut de Math\'ematiques de
  Toulouse \\  Universit\'e Paul Sabatier \\ 118 route de Narbonne
  \\ 31062 Toulouse Cedex 9\\
France.}
\email{yak@mip.ups-tlse.fr} 
\date{Version of \today}
\begin{document}
\maketitle 
\tableofcontents
\vspace{0.cm}

\begin{abstract} 
We propose a numerical analysis of a simplified version
of the previous paper \emph{Multiplicity hunting and approximating
  multiple roots of polynomial systems} written by the two authors.
\end{abstract}
\pagebreak

\section*{Index of Main Symbols}
\def\indexspace{}
\begin{minipage}[b]{15cm}
  \begin{multicols}{3}
    \printindex
  \end{multicols}
\end{minipage}

%
\pagebreak

\section{Equivalent systems and multiplicity}
 The paper \textbf{\textsf{Multiplicity hunting and approximating
 multiple roots of polynomial systems}}~\cite{GY2} was written in
 a heuristic way. We achieve its numerical analysis in the present
 paper, by the way simplifying the procedure given previously.

\begin{defi}
A root $\zeta$
\ncla{$\zeta$} 
of an analytic system $f=0$
\ncla{$f$} 
(defined in a neighbourhood of
$\zeta$) is isolated and singular if
\begin{itemize}
 \item[1--] there exists a neighbourhood of $\zeta$ where
 $\zeta$ is the only root of $f=0$.
 \item[2--] the Jacobian matrix $Df(\zeta)$ is not full rank.
\end{itemize}
\end{defi}
Remark that the first assumption implies that the number of equations
is larger or equal than the number of variables. Note also that this
frame includes the important particular case of an analytic system
obtained by localizing a polynomial system.\\ 
We shall use equally the words singular or multiple for such a root.
We have explained in \cite{GY2} how to derive a regular system (i.e
admitting $\zeta$ as regular root) from a singular system at a multiple
isolated root, provided the assumption that $\zeta$ is exactly known. We
formalized this transformation by the notion of equivalent systems at
a point $\zeta$.  More precisely let $\zeta$ be a multiple isolated root of an
analytic system $f(x)=(f_1(x),\ldots,f_s(x))$ with $x$ in a
neighbourhood of $\zeta$ in $\C^n$ (note that $s \geq n$). Our
method computed a regular system admitting the same root $\zeta$, and that
we called \emph{equivalent}. Note that this is obtained
\textbf{\textsf{without adding new variables}} (important feature we
underline).\\


The \emph{multiplicity} of a root is an important numerical invariant.
In the case where there is only one variable and one equation, the
multiplicity of a root is exactly the number of derivatives which
vanishes at the root, which is unfortunately no longer true in the
multivariate situation. We have to introduce a more complicated
machinery.
 
Let us call 
\begin{itemize}
\item[1--]
$\C\{x-\zeta\}$ the ring of the germs of analytic functions at $\zeta$,
  i.e. the ring of convergent power series in a neighbourhood of
  $\zeta$, with maximal ideal generated by $x_1-\zeta_1,\ldots, x_n-\zeta_n$.
\item[2--] $I\C\{x-\zeta\}$ the ideal induced  generated by the
  ideal $I = I(f):=<~f_1,\ldots, f_s>$ in $\C\{x-\zeta\}$.
\end{itemize}

\begin{defi}
The multiplicity $\mu(\zeta)$\ncla{$\mu(\zeta)$} of an isolated root $\zeta$ is defined as the
dimension of the quotient space $\C\{x-\zeta\}/I\C\{x-\zeta\}$.
\end{defi}

Relatively to $<$ a admissible local order in $\C\{x-\zeta\}$, we denote
by $LT(IC(x-\zeta))$ the ideal generated by the leading terms of all elements of
$I\C\{x-\zeta\}$.
\begin{defi}
A (minimal) standard basis of $I\C\{x-\zeta\}$ is a finite set of series of
$I\C\{x-\zeta\}$ whose leading terms generate minimally $LT(IC(x-\zeta))$.
\end{defi}
We can prove that there is only a finite number of monomials, named
standard monomials, which are not in $I$. The following theorem is
classical in the literature about standard bases.
\begin{tm}\label{coxp178}
The following are equivalent:
 \begin{itemize}
 \item[1--] The root $\zeta$ is isolated.
 \item[2--] 
$dim\, \C \{x-\zeta\}/IC(x-\zeta)$ is finite.
 \item[3--] $dim\, \C \{x-\zeta\}/LT(IC(x-\zeta))$ is finite.
 \item[4--] There are only finitely many standard monomials.
 \end{itemize}
 Furthermore, when any of these conditions is satisfied, we have
 $$\mu(\zeta)=dim\, C \{x-\zeta\}/LT(IC(x-\zeta))= \textrm{number of standard
   monomials}.$$
\end{tm}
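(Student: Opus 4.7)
The plan is to prove the equivalences in a cycle, separating the purely algebraic part (conditions 2--4, together with the multiplicity count) from the analytic-geometric part (condition 1).

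First I would establish the easy equivalence (3)$\Leftrightarrow$(4). By definition a standard monomial is a monomial of $\mathbb{C}\{x-\zeta\}$ that does not belong to $LT(I\mathbb{C}\{x-\zeta\})$. Since $LT(I\mathbb{C}\{x-\zeta\})$ is a monomial ideal, the standard monomials form a $\mathbb{C}$-basis of the quotient $\mathbb{C}\{x-\zeta\}/LT(I\mathbb{C}\{x-\zeta\})$; therefore the dimension of that quotient is finite if and only if the number of standard monomials is finite, and in that case they are equal.

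Next I would prove (2)$\Leftrightarrow$(3) together with the equality $\mu(\zeta)=\dim \mathbb{C}\{x-\zeta\}/LT(I\mathbb{C}\{x-\zeta\})$. The tool is the Grauert--Hironaka division theorem (the analytic counterpart of the Buchberger division) with respect to the admissible local order $<$: every germ $g\in\mathbb{C}\{x-\zeta\}$ can be written uniquely as $g=h+r$ where $h\in I\mathbb{C}\{x-\zeta\}$ and $r$ is a (possibly infinite) convergent series supported on standard monomials. This gives a $\mathbb{C}$-linear isomorphism between $\mathbb{C}\{x-\zeta\}/I\mathbb{C}\{x-\zeta\}$ and the space spanned by standard monomials, i.e. $\mathbb{C}\{x-\zeta\}/LT(I\mathbb{C}\{x-\zeta\})$. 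Both quotients therefore have the same dimension, which by step~1 is the number of standard monomials; this yields the multiplicity formula.

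Finally I would establish (1)$\Leftrightarrow$(2), the geometric step. For (2)$\Rightarrow$(1): if $A:=\mathbb{C}\{x-\zeta\}/I\mathbb{C}\{x-\zeta\}$ is finite-dimensional over $\mathbb{C}$, then $A$ is Artinian and its maximal ideal is nilpotent, so some power $\mathfrak{m}^N$ of the maximal ideal $\mathfrak{m}=(x_1-\zeta_1,\ldots,x_n-\zeta_n)$ lies in $I\mathbb{C}\{x-\zeta\}$; pulling back to a polydisc around $\zeta$, each $(x_i-\zeta_i)^N$ is, up to multiplication by a unit, an analytic combination of the $f_j$, which forces $\zeta$ to be the only common zero in a small neighbourhood. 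For the converse (1)$\Rightarrow$(2) one invokes the Rückert Nullstellensatz in the local analytic ring: if $\zeta$ is isolated, the radical of $I\mathbb{C}\{x-\zeta\}$ is the maximal ideal $\mathfrak{m}$, so a power $\mathfrak{m}^N$ sits inside $I\mathbb{C}\{x-\zeta\}$; then $A$ is a Noetherian module annihilated by $\mathfrak{m}^N$ and finitely generated over $\mathbb{C}\{x-\zeta\}/\mathfrak{m}^N$, hence finite-dimensional over $\mathbb{C}$.

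The main obstacle is this last equivalence, since it requires either the analytic Nullstellensatz (Rückert) or an equivalent Weierstrass-preparation-based argument; the other implications are essentially formal consequences of the existence of a standard-basis division in $\mathbb{C}\{x-\zeta\}$. I would therefore cite Rückert's theorem and focus the written proof on the division-algorithm step, which is exactly what delivers both the equivalence (2)$\Leftrightarrow$(3) and the numerical equality defining $\mu(\zeta)$.
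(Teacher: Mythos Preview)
Your outline is correct and follows the standard route to this classical result: the formal equivalences (2)$\Leftrightarrow$(3)$\Leftrightarrow$(4) via the division theorem for local standard bases, and the geometric equivalence (1)$\Leftrightarrow$(2) via R\"uckert's Nullstellensatz. There is nothing to compare against, however: the paper does not prove this theorem at all. It is introduced with the sentence ``The following theorem is classical in the literature about standard bases'' and is simply stated without proof, as background material for the notion of multiplicity. So your proposal goes well beyond what the paper itself does; in the paper's context a one-line reference (e.g.\ to Cox--Little--O'Shea or to the standard-basis literature) would already match the authors' treatment.
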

In the particular case of a localized polynomial system, whose
equation have a total degree upper bounded by some integer $d$, the
multiplicity is upper bounded by $d^n$.
\section{ Overview of this study}
To approximate a multiple isolated root is difficult because the root
can be a repulsive point for a fixed point method like the classical
Newton's method (see the example given by Griewank and Osborne in
~\cite{GO83}, p. 752). From a point of view of the theoretical
analysis, the technical background used when the derivative has
constant rank is not possible. This case is well understood and there
are many papers on this subject, see for
instance~\cite{xuli2008},~\cite{ArHi2011} and references within. To
overcome this drawback, the goal is to define an operator named
singular Newton operator generalizing the classical Newton operator
defined in the regular case. To do so we construct a finite sequence
of equivalent systems named \emph{deflation sequence}, where the
multiplicity of the root drops strictly between two successive
elements of the sequence. Hence the root is a regular root for the
last system. Then we extract from it a regular square system we named
deflated system. The singular Newton operator is defined as the
classical Newton operator associated to this deflated system.\\

We now explain the main idea of the construction of the deflation
sequence.  Since the Jacobian matrix is rank deficient at the root, it
means that there exists relations between the lines (respectively
columns) of this Jacobian matrix. These relations are given by the
Schur complement of the Jacobian matrix. When adding the
elements of the Schur complement to the initial system (we
call this operation \textit{kerneling}), we obtain an equivalent
system where the multiplicity of the root has dropped. In this way, a
sequence of equivalent system can be defined iteratively. This will be
explained in section \ref{Kern-Sing-Newton}.

Then we perform a local $\alpha$-theory of Smale of this singular
Newton operator. We first state a $\gamma$-theorem, i.e., a
result which gives the radius of quadratic convergence of this
singular Newton operator and next we give a condition using Rouché's
theorem to prove the existence of a singular root.

The context of this study is that of square integrable analytic
functions. In this way, it is possible to represent an analytic
function and its derivatives thanks to an efficient kernel : the
Bergman kernel. Moreover, our study is free of $\varepsilon$ (the measure
of the numerical approximation) in the following sense:
\begin{defi}
We said that a numerical algorithm is free of $\varepsilon$
if the input of the algorithm does not contain the variable $\varepsilon$.
\end{defi}  
The determination of a deflation sequence presented in the
table~\ref{dfl_table} is free of $\varepsilon$ under the assumption that
the norm defined in section~\ref{functional} (or an upper bound) is
given. To do that we present new results to determine by algorithms
free of $\varepsilon$:
\begin{itemize} 
\item[1--] The numerical rank of a matrix : this is achieved in
  section~\ref{rank_sec}.
\item[2--] How close to zero is the evaluation map, see the
  section~\ref{evaluation_sec}.
\end{itemize} 
We will see that the two previous problems are applications of the
$\alpha$-theory.\\

The analysis we present here generalizes what was done by Lecerf,
Salvy and the authors of the present work ~\cite{GLSY07}. Under the
hypothesis of a square system ($s=n$) and a multiple root of embedding
dimension one, i.e., where the the rank of Jacobian matrix drops
numerically by one, we treated the case of cluster of zeroes using
numerically the implicit function theorem. More precisely, there
exists an analytic function $\varphi(x_1,\ldots,x_{n-1})$ such that
$\zeta_n=\varphi(\zeta_1,\ldots,\zeta_{n-1})$ and hence $\zeta_n$ is a root of the
univariate function $h(x_n)=f_n(\varphi(x_1,\ldots,x_{n-1}),x_n)$.
Applying the results established in ~\cite{GLSY05} on the function
$h(x_n)$, we can deduce both the multiplicity of $\zeta_n$ and a way to
approximate quickly the root $\zeta_n$. Note that this work extends the
case of "simple double zeroes" previously studied by Dedieu and
Shub~\cite{DS00}.
\section{ Related works}
 
The case of one variable and one equation was hugely studied in the
literature and the generalization of the classical Newton operator is
the Schr\"oder operator defined in page 324
of~\cite{schroder1870}. Moreover, the $\alpha$-theory of this operator
is done in~\cite{GLSY05} with main references on this subject.\\
 
The multivariate case has been studied from purely symbolic and/or
numerical points of view. We will not discuss here the works with
only a symbolic treatment, see for instance~\cite{cox05}. One of
numerical pioneers is Rall~\cite{rall66}. He treats the particular
case where the singular root satisfies the following assumption: there
exists an index $m$, defined as the multiplicity of $\zeta$, such that
$N_m=\{0\}$ where $$N_1=\Ker Df(\zeta),\quad N_{k+1}=N_k\cup \Ker
Df^{k+1}(\zeta),\quad k=1:m-1.$$ Then it is possible to construct
iteratively an operator to retrieve the local quadratic convergence of
the classical Newton operator.  The idea of the construction of this
operator consists to project iteratively the error $x_0-\zeta$ on the
kernels $N_k$ and its orthogonal $N_k^\perp$.\\ 

At the same time, the idea to use a variant of a Gauss-Newton's method
to approximate a singular isolated root has been investigated by
Shamanskii in ~\cite{shamanskii67}. But this method converges
quadratically towards the singular root under very particular
assumptions.\\

Another techniques are bordered techniques, where some assumption is
done on the root. For instance, if the operator induced by the
projection from $\Ker Df(\zeta)$ into $\Ker (Df(\zeta)^*)^\perp$:
$$\dis~\pi_{(\Ker Df(\zeta)^*)^\perp}D^2f(\zeta)(z,\pi_{\Ker Df(\zeta)})$$
is invertible, then the $(\zeta,0)$ is a regular root of a system, called
bordered system, having $2n-r$ variables. The bordered system is
constructed from the initial system and from the singular value
decomposition of the Jacobian matrix. This way has been developed by
Shen and Ypma in~\cite{ShenYpma2005} and extends this bordered
technique used by Griewank~\cite{G85} in the case of deficient rank
one. At the beginning of the eighties a collection of papers addresses
the problem of the approximation of the singular roots with similar
techniques ~\cite{reddien1978},~\cite{reddien1979}, ~\cite{DK180},
~\cite{DK280}, ~\cite{GO81}, ~\cite{DKK83} ~\cite{kelleysuresh1983},
~\cite{yamamoto1983}. These methods previously cited are purely
numerical methods and neither the geometry of the problem nor the notion of
multiplicity are mentioned.\\
  
Ojika in ~\cite{ojika87} proposes a similar method called deflation
method to compute a regular system from the singular initial one, by
mixing both symbolic and numerical calculations. This paper is an
extension of an algorithm previously developed in~\cite{OWM83}. The
search of a regular system deals with Gauss forward elimination but there
is no analysis of this procedure, especially no numerical determination
of the rank. Note also that the attempt to classify the singular
roots suffers from not being related to the concept of multiplicity.
Moreover, there is no study of complexity, in the case where we study
a localized polynomial system. This approach was echoed
by Lecerf in~\cite{lecerf02}. He was able to give a deflation
algorithm which outputs a regular triangular system at a root
$\zeta$. Moreover he studied precisely the complexity of his deflation
algorithm, which is in:
  $$\mathcal{O}\left (n^3(nL+n^\Omega)\mu(\zeta)^2\log(n\;\mu(\zeta))\right )$$ where $n$
is the number of variables, $\mu(\zeta)$ the multiplicity, $3\le \Omega<4$ and
$L$ is the length of the straight line program describing the system.\\

Leykin, Verschelde and Zhao proposed in~\cite{lvz06} a similar
modified deflation method, based on the following observation: if the
numerical rank of the system is $r$, there exists an isolated solution
$(\zeta,\delta)\in \C^n\times\C^{r+1}$ of the system
  \begin{equation}\label{dfl_versch}
  Df(x)B\delta=0,\quad \delta^*h-1=0,
\end{equation}  
where $B\in \C^{n\times (r+1)}$ and $h\in\C^{r+1}$ are randomly
chosen. The multiplicity of the root $(\zeta,\delta)$ of the deflated
system is lower than the multiplicity $\mu(\zeta)$ of the root $\zeta$ of the initial
system. Then a step of deflation consists to add the
equations~(\ref{dfl_versch}) to the initial system. The theorem is
then that it is enough to perform $\mu(\zeta)-1$ steps of deflation to get a
regular system. This implies that the numbers of variables and
equations can double in the worst case. And unfortunately the
determination of the numerical rank, based on the work of
~\cite{fierro_hansen_05}, is not free of $\varepsilon$.\\

In the same same vein we have the papers of Dayton and
Zeng~\cite{DZ05} which treats the polynomial case, Dayton, Li and Zeng in the
analytic case~\cite{DLZ11} and Nan Li, Lihong Zhi ~\cite{li2014}. Particular cases were studied by Nan Li and Lihong Zhi in several papers~\cite{li121}, ~\cite{li2012}. But all these papers furnish a superficial
numerical analysis of their algorithms.\\ 

The duality and the relationship with the Macaulay matrices constitute
the theoretical background of Mourrain~\cite{Mou97}, Mantzaflais and
Mourrain~\cite{MM11} or more recently Hausenstein, Mourrain, Szanto
in~\cite{hauenstein_mourrain_szanto_16}. Actually, the relations
between the columns (respectively the lines) represent those of the
space (respectively, columns). As we shall point out, a classical fact
show that all these relations can be found through the Schur complement.

 \section{ Tracking the rank of a matrix}\label{rank_sec}
 Let $s \geq n$ be two integers, $M$ a $s\times n$-matrix with complex
 coefficients, $U\Sigma V^*$ a singular value decomposition of $M$,
 and $\sigma_1\ge \ldots \ge \sigma_n$ its singular values.\\

We consider the elementary symmetric sums of the $\sigma_i$'s, i.e.:
$$s_{k}=\sum_{1\le i_1<\ldots <i_k\le n}
\sigma_{i_1}\ldots\sigma_{i_k},\quad k=1:n.$$
\ncla{$s_k$}
In other words, the singular values are the roots of the polynomial
$s(\lambda)$ of
degree $n$ 

$$ s(\lambda) = \prod_{i=1}^n (\lambda - \sigma_i) = \lambda ^n + \sum_{1 \le i \le n} (-1)^{(n-i)} s_{n-i} \lambda ^i .$$
\ncla{$s(\lambda)$}
By convention $s_0=1$. Let us remark that this convention is
natural: it allows to treat the case where all the singular values
are zero, which means that the matrix $M$ is
null and its rank is zero.\\

More generally if the rank of $M$ is $r$, the $s_i$'s are
non-zero up to the rank ($i=0:r$), and zero after. Then for $k=n-r:n$,
the quantities $s_{n-k}$ are non-zero and we can introduce:

\begin{itemize}
\item[1--] $\dis b_{k}(M):=\max_{0\le i\le k-1}
\left (\frac{s_{n-i}}{s_{n-k}}\right )^{\frac{1}{k-i}}$.\ncla{$b_k(M)$}
\vspace{0.2cm}

\item[2--] $\dis g_{k}(M):=\max_{k+1\le i\le n} 
\left (\frac{s_{n-i}}{s_{n-k}}\right )^{\frac{1}{i-k}}$.\ncla{$g_k(M)$}
\vspace{0.3cm}

\item[3--] $\dis a_{k}(M):=b_{k}(M)\, g_{k}(M).$\ncla{$a_k(M)$}
\end{itemize}
with the convention $g_n(M)=1$.\\

We precise the notion of $\varepsilon$-rank used in the sequel.

\begin{defi}\label{defi_numerical_rank}

Let $\varepsilon$ be a nonnegative number. A matrix $M$ has
$\varepsilon$-rank\ncla{$\varepsilon$-rank} equal to $r_{\varepsilon}$ if its singular values verify
\begin{equation}\label{test_rank}
\sigma_1\ge \ldots \ge \sigma_{r_{\varepsilon}}>\varepsilon\ge \sigma_{r_{\varepsilon}+1}
\ge\ldots \ge \sigma_n.
\end{equation}
\end{defi}
Observe that an upper bound for the $\varepsilon$-rank is the rank $r$
itself.\\
Let $\Sigma_\varepsilon$ the matrix obtained from $\Sigma$ by 
putting $\sigma_ {r+1}=\ldots= \sigma_n=0$. We define
$M_\varepsilon=U\Sigma_\varepsilon V^*$.
\begin{req}
If $rank\, M\ge r$, we know that $M_\varepsilon$
\ncla{$M_\varepsilon$} is the nearest matrix of
$M$ which is of rank $r$.
\end{req}

\begin{req}
The definition~\ref{defi_numerical_rank} is justified by
the Eckardt-Young-Mirsky theorem which has a long story
in low rank approximation theory: see \cite{EY36}, \cite{mirsky60} and
\cite{markovsky2011} for more recent developments.
\end{req}
For simplicity let us denote by $a_k$, $b_k$, $g_k$ the corresponding
values $a_k(M)$, $b_k(M)$, $g_k(M)$.
\begin{tm}\label{rank_tm}
Let a matrix $M$ be such that $\rank(M)=r$.
Let $m$ an integer be such that $n-r \le m \le n$,
and $$\varepsilon=\frac{3a_{m}+1-\sqrt{(3a_{m} +1)^2-16a_{m}}}{4g_{m}}.$$
If $a_{m}<1/9$ then the matrix $M$ has $\varepsilon$-rank equal to $n-m$. 
\end{tm}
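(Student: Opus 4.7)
The plan is to view the singular values $\sigma_1,\dots,\sigma_n$ as the roots of the monic polynomial $s(\lambda)=\prod_{i=1}^n(\lambda-\sigma_i)$ and apply Rouch\'e's theorem on the circle $|\lambda|=\varepsilon$, comparing $s(\lambda)$ with the ``pivot'' monomial $(-1)^{n-m}s_{n-m}\lambda^m$. If the Rouch\'e inequality holds strictly on this circle, then $s(\lambda)$ and the pivot have the same number of zeros in the open disk of radius $\varepsilon$, namely $m$. Since the $\sigma_i$'s are nonnegative reals, this forces $\sigma_{n-m+1},\dots,\sigma_n<\varepsilon$ and $\sigma_1,\dots,\sigma_{n-m}>\varepsilon$, which is exactly the statement that $M$ has $\varepsilon$-rank equal to $n-m$.

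The concrete steps are as follows. First, split the remainder $s(\lambda)-(-1)^{n-m}s_{n-m}\lambda^m$ into its low-degree part ($0\le i\le m-1$) and its high-degree part ($m+1\le i\le n$). By the definition of $b_m$ one has $s_{n-i}\le s_{n-m}\,b_m^{m-i}$ for $i<m$, and by the definition of $g_m$ one has $s_{n-i}\le s_{n-m}\,g_m^{i-m}$ for $i>m$. Provided $b_m<\varepsilon$ and $g_m\varepsilon<1$, summing these bounds on $|\lambda|=\varepsilon$ as geometric series yields the strict inequality
\[
\frac{\bigl|s(\lambda)-(-1)^{n-m}s_{n-m}\lambda^m\bigr|}{s_{n-m}\,\varepsilon^{m}}
\;<\;\frac{b_m/\varepsilon}{1-b_m/\varepsilon}+\frac{g_m\varepsilon}{1-g_m\varepsilon}.
\]
Next I would determine for which $\varepsilon$ the right-hand side is at most $1$. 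Setting $y=g_m\varepsilon$ (so $b_m/\varepsilon=a_m/y$) and clearing denominators reduces this to the quadratic condition $2y^2-(1+3a_m)\,y+2a_m\le 0$, whose discriminant factorises as $(1-a_m)(1-9a_m)$. This is positive exactly when $a_m<1/9$, and in that regime the solution interval $[y_-,y_+]$ sits strictly inside $(a_m,1)$, so both auxiliary hypotheses $b_m<\varepsilon$ and $g_m\varepsilon<1$ are automatic on it. A direct substitution identifies $g_m\varepsilon=y_-$ for the $\varepsilon$ of the statement.

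Once Rouch\'e's hypothesis is established with strict inequality at this $\varepsilon$, the theorem gives exactly $m$ roots of $s$ in the open disk of radius $\varepsilon$ and none on its boundary, so that $\sigma_{n-m}>\varepsilon>\sigma_{n-m+1}$ as required by Definition~\ref{defi_numerical_rank}. The main obstacle I expect is purely algebraic: verifying that the closed form of $\varepsilon$ in the statement really equals $y_-/g_m$ and lies in the admissible interval. A small additional care is needed for the degenerate case $b_m=0$ (which forces $\varepsilon=0$): here the claim reduces to $\rank(M)=n-m$, a direct consequence of $m\ge n-r$ together with the vanishing of $s_{n-i}$ for all $i<m$.
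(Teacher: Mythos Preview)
Your proposal is correct and follows the same overall strategy as the paper: apply Rouch\'e's theorem to the polynomial $s(\lambda)$ on the circle $|\lambda|=\varepsilon$, bound the off-pivot terms by geometric series governed by $b_m$ and $g_m$, and reduce to the trinomial $2y^2-(3a_m+1)y+2a_m$ in the variable $y=g_m\varepsilon$. Your quadratic is literally the paper's, its discriminant $(1-a_m)(1-9a_m)$ is the paper's $\Delta$, and your $y_-/g_m$ is the paper's $\tau_1/g_m=\varepsilon$.

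The one genuine difference is the choice of comparison function. The paper compares $p(\lambda)=s(\lambda)/s_{n-m}$ with the polynomial $q(\lambda)=\sum_{i=m}^{n}(-1)^{n-i}(s_{n-i}/s_{n-m})\lambda^i$ and therefore needs an auxiliary lemma (Lemma~\ref{q}) bounding $|q(\lambda)|$ from below and a separate argument that $q$ has no nonzero root in $B(0,1/(2g_m))$. You compare directly with the single monomial $(-1)^{n-m}s_{n-m}\lambda^m$, whose only root is trivially $0$ with multiplicity $m$; this dispenses with the auxiliary lemma entirely. Both decompositions lead to the identical trinomial because the high-degree tail contributes the same geometric bound $\tau/(1-\tau)$ whether it is placed on the ``known'' side (paper) or the ``perturbation'' side (you). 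Your route is thus a mild simplification of the paper's; the paper's route, on the other hand, makes transparent the whole admissible range $\tau_1\le g_m|\lambda|<1/2$ rather than just the endpoint.
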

\begin{proof} As $n-r \le m$, the quantity $s_{n-m}$ is not zero since
  it is positive. Let us consider the polynomials
$$p(\lambda)=\frac{1}{s_{n-m}}s(\lambda)=\frac{1}{s_{n-m}}\prod_{i=1 }^n(\lambda-\sigma_i)=
\sum_{i=0}^{n}(-1)^{n-i}\frac{s_{n-i}}{s_{n-m} }\lambda^{i}$$
\ncla{$p(\lambda)$}
and
$$q(\lambda)= \sum_{i=m}^{n}(-1)^{n-i}\frac{s_{n-i}}{s_{n-m}}\lambda^{i}.$$
\ncla{$q(\lambda)$}
\begin{lm}\label{q}
Let $\tau:= g_m |\lambda|$.
\ncla{$\tau$} Then for all $\lambda$ such that $|\lambda| <
1/g_m$, hence for all $\tau<1$:
$$ |q(\lambda)| \ge |\lambda|^m \frac{1-2\tau}{1-\tau} $$
\end{lm}
\begin{proof}
\begin{align}
|q(\lambda)|&=\left |
\lambda^m+ \sum_{i=m+1}^{n}(-1)^{n-i}\frac{s_{n-i}}{s_{n-m}}\lambda^{i}
\right |\nonumber
 \\&\ge 
 |\lambda|^m- \sum_{i=m+1}^{n}\frac{s_{n-i}}{s_{n-m} } |\lambda|^{i} \nonumber
 \\
 &
\ge |\lambda|^m\left (1- \sum_{i=m+1}^{n}\frac{s_{n-i}}{s_{n-m} } |\lambda|^{i-m}\right )
 \nonumber
 \\
 &
\ge |\lambda|^m\left (1- \sum_{i\ge m+1 } (g_m |\lambda|)^{i-m} \right )
\nonumber\\
&
\ge |\lambda|^m\frac{1-2g_m|\lambda|}{1-g_m|\lambda|}. 
\end{align}
\end{proof}
We first prove that $0$ is the only root of $q(\lambda)$
in the open ball $\dis B\left (0,\frac{1}{2g_m}\right )$.
Let $\nu$ be a non-zero root of $q(\lambda)$. 
Then we have by lemma \ref{q} $$0=q(\nu)=|q(\nu)| \ge |\nu|^m \frac{1-2g_m|\nu|}{1-g_m|\nu|}.$$
Hence $\dis |\nu | \ge \frac{1}{2g_m}$.
\\
Now consider the trinomial 
\begin{equation}\label{pol2}
2\tau^2-(3a_m+1)\tau+2a_m.
\end{equation}
If $a_m <1/9$,
then this trinomial has two real roots $\tau_1 <\tau_2$, since its 
$$\Delta = (3a_m+1)^2 - 16a_m = 9a_m{^2} - 10a_m +1 = (9a_m -
1)(a_m-1)$$ 
is positive. We can check explicitely that $\tau_1$ is positive, since it
boils down to $a_m$ being positive.

We prove that for $|\lambda|$ satisfying $\dis\frac{\tau_1}{g_m} \le |\lambda| < \frac{1}{2g_m}$,
$p(\lambda)$ has $m$ roots counting with multiplicities in the open
ball $B(0,|\lambda|)$ (note that the range of the interval where $|\lambda|$ is
asked to live is positive, since $\tau_1 < 1/2$).
To do that, we verify that Rouché's inequality
\begin{equation}\label{eq_rouche_rank}
|p(\lambda)-q(\lambda)|<|q(\lambda)|
\end{equation}
 holds on the sphere of radius $|\lambda|$.
We have
 \begin{align}\label{eq_rouche_rank_g}
 |p(\lambda)-q(\lambda)|&\le 
 \sum_{i=0 }^{m-1}\frac{s_{n-i}}{s_{n-m} }|\lambda|^{i}
 \nonumber
 \\
 &
\le \sum_{i=0 }^{m-1} b_m^{m-i}|\lambda|^{i}
\nonumber\\
 &
 \le |\lambda|^m\frac{b_m/|\lambda|}{1-b_m/|\lambda|}
\nonumber \\
 &
 \le \frac{a_m}{g_m|\lambda|-a_m}|\lambda|^m.
 \end{align}
 We check that $\dis
 \tau-a_m > \tau_1 - a_m = \frac{-a_m +1
   -\sqrt{\Delta}}{4}$ is positive if $a_m<1/9$.\\
From ~(\ref{eq_rouche_rank_g}) and lemma ~\ref{q}, we see that the
 Rouch\'e's inequality is satisfied if
 $$\frac{a_m}{\tau-a_m}|\lambda|^m
< \frac{1-2\tau}{1-\tau}|\lambda|^m.$$
Since $|\lambda|$, $1-\tau$ and $\tau-a_m$ are positive, this is equivalent to
the trinomial (\ref{pol2}) being negative, which is insured by the
condition $a_m<1/9$.\\
Hence under the condition $a_m<1/9$ the polynomial
$p(\lambda)$ has exactly $m$ roots counting the multiplicities in the open ball $B(0,|\lambda|)$ where
$$ \varepsilon:=\frac{\tau_1}{g_m}\le |\lambda| < \frac{1}{2g_m}.$$
Consequently we have
$$\sigma_1\ge\ldots \ge \sigma_{n-m}>\varepsilon\ge \sigma_{n-m+1}\ge \ldots\ge \sigma_n.$$
We are done.
\end{proof}
 \begin{tm}
 The algorithm of the table~\ref{table_rank} computes the
 $\varepsilon$-rank of a matrix thanks to the theorem~\ref{rank_tm}. 
 \end{tm}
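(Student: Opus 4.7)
The plan is to verify that the procedure displayed in Table~\ref{table_rank} is a direct encoding of Theorem~\ref{rank_tm}, so that correctness reduces to showing that the hypothesis $a_m<1/9$ is tested for every relevant index $m$ until it is first satisfied.

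First I would describe the structure of the algorithm. The input is the matrix $M$ alone, with no tolerance parameter. The algorithm begins by computing the singular values $\sigma_1\ge\cdots\ge\sigma_n$ of $M$ via a standard SVD routine, and from them the elementary symmetric sums $s_0=1,s_1,\ldots,s_n$; equivalently, these appear (up to sign) as the coefficients of $s(\lambda)=\prod_{i=1}^n(\lambda-\sigma_i)$. These quantities yield $b_m(M)$, $g_m(M)$ and $a_m(M)=b_m(M)\,g_m(M)$ for every $m$ such that $s_{n-m}\ne 0$.

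Next, I would explain the main loop: the algorithm scans $m$ in increasing order and evaluates $a_m$ as soon as $s_{n-m}\ne 0$. The first index $m^\star$ for which $a_{m^\star}<1/9$ is kept, and the algorithm returns the pair
\[
r_\varepsilon:=n-m^\star,\qquad \varepsilon:=\frac{3a_{m^\star}+1-\sqrt{(3a_{m^\star}+1)^2-16a_{m^\star}}}{4\,g_{m^\star}}.
\]
Correctness is then immediate: the condition $s_{n-m^\star}\ne 0$ guarantees $n-r\le m^\star\le n$, so $m^\star$ is a legal choice in Theorem~\ref{rank_tm}; together with $a_{m^\star}<1/9$ and the explicit value of $\varepsilon$, the theorem yields that $M$ has $\varepsilon$-rank equal to $n-m^\star$, which is exactly what the algorithm outputs.

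The main obstacle I anticipate is verifying that the loop always terminates successfully, i.e.\ that some admissible $m^\star$ exists. A natural way is to observe that at the exact rank gap $m=n-r$ one has $s_{n-m}>0$ while $s_{n-i}=0$ for all $i<m$, so $b_m=0$ and therefore $a_m=0<1/9$; this ensures termination with some $m^\star\le n-r$. A second point to justify is the claim that the procedure is free of $\varepsilon$ in the sense of the paper: the quantities $s_k$, $b_m$, $g_m$, $a_m$ and the predicate $a_m<1/9$ are all evaluated from $M$ alone, while $\varepsilon$ appears only as an output computed a posteriori from $a_{m^\star}$ and $g_{m^\star}$.
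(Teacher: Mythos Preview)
The paper offers no proof of this statement; it is asserted as an immediate consequence of Theorem~\ref{rank_tm} and the table, followed only by the remark that the procedure is free of~$\varepsilon$. Your explication is in the same spirit and the core step---that any admissible $m$ with $a_m<1/9$ yields, via Theorem~\ref{rank_tm}, a pair $(\varepsilon,n-m)$ for which $n-m$ is the $\varepsilon$-rank---is correct.

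Two points deserve comment. First, the scanning order you impose (increasing $m$, stop at the first success) is your addition; the table only says ``if there exists $m\ge n-r$''. With your order the very argument you give for termination bites back: since $b_{n-r}=0$ and hence $a_{n-r}=0<1/9$, the first success is always $m^\star=n-r$, the formula then gives $\varepsilon=0$, and the output collapses to the exact rank~$r$. This is still a correct instance of the theorem, but it trivialises the algorithm, whose purpose is precisely to detect a gap at some $m>n-r$ and return a numerical rank below~$r$. The statement as written is vague enough that your proof remains valid; just be aware that the table leaves the choice of $m$ unspecified, and a useful implementation would rather seek a large admissible~$m$.

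Second, your termination argument covers only $r<n$ (so that $m=n-r\ge 1$ and the max defining $b_{n-r}$ runs over indices with $s_{n-i}=0$). When $r=n$ the index $m=n-r=0$ gives an empty max and your argument does not apply; this is exactly the situation handled by the \textsf{else} branch of the table, which you do not discuss.
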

\begin{req}
In fact this algorithm is free of $\varepsilon$ and we call the computed
$\varepsilon$-rank the \emph{numerical} rank of the given matrix.
\end{req}
  \begin{table}
 $$\fbox{
 \begin{minipage}{1\textwidth }
$$
\textbf{ numerical rank }$$
\begin{enumerate}[1-]
\item \quad Input : a matrix $M\in \C^{s\times n}$, $s\ge n$.
\item \quad  Compute the singular values of $M$ : $\sigma_1\ge\ldots
  \ge\sigma_n$.
\item \quad Let $r$ be the rank of $M$, i.e. $\sigma_{r+1} > 0,
  \sigma_r = 0$.
\item  \quad From these $\sigma_i$'s, compute the quantities $a_k$, $
  k=n-r:n$ and $g_k$ defined in the section~\ref{rank_sec}.
\item \quad if there exists $m \ge n-r$ s.t. $a_m<1/9$ then
\\
\item \quad \quad\quad $\dis \varepsilon:=\frac{3a_m+1-\sqrt{(3a_m+1)^2-16a_m}}{4g_m}$
\item \quad \quad\quad the $\varepsilon$-rank of the matrix $M$ is $n-m$.  \quad\quad\textsf{from the theorem~\ref{rank_tm}}
\item \quad else
\item \quad \quad\quad $\varepsilon<\sigma_n$. The $\varepsilon$-rank of the matrix $M$ is $n$.
\item  \quad end if
\item \quad Output : the $\varepsilon$-rank of the matrix $M$.
\end{enumerate}
   \end{minipage}
 }
 $$
 \caption{}\label{table_rank}
 \end{table} 
\section{ The functional framework}\label{functional}
Let $n\ge 2$, $R_\omega\ge0$\ncla{$R_\omega$}
 and $\omega\in \C^n$. \ncla{$\omega$}  We consider the set
$\A2(\omega,R_{\omega})$
\ncla{$\A2(\omega,R_{\omega})$} of the square integrable analytic functions in the
open ball $B(\omega,R_{\omega})$, which is an Hilbert space equipped with the
inner product
$$<f,g> =\int_{B(\omega,R_{\omega})}f(z)\overline{g(z)} d\nu(z),$$
where $\nu$ is the Lebesgue  measure on $\C^n$, normalized so that
$\nu(B(\omega,R_{\omega}))=R_{\omega}^{2n}$.  
\\
Next $(\A2(\omega,R_{\omega}))^s$ has an hilbertian structure with
the inner product
$$<f,g>=\sum_{i=1}^s<f_i,g_i>.$$
We denote by $||f||$\ncla{$||f||$} the associated norm.\\

Observe that this framework includes the case of an analytic system
obtained by localizing a polynomial system.

\subsection{The Bergman kernel}
in~\cite{rudin08} and S.G. Krantz in~\cite{krantz13}. Since for each
$x\in B(\omega,R_{\omega})$ and $f\in\A2(\omega,R_{\omega})$, the
evaluation map $f\mapsto f(x)$ is a continuous linear functional
$eval_x$ on $\A2$, there exists from the Riesz representation theorem
an element $h_x \in \A2$ such that
$$f(x)=eval_x(f)=<f,h_x>.$$\\
Set down the function $\rho := x \mapsto \rho_x = \|x-\omega \|$.
\ncla{$\rho_x$}
\begin{defi}\label{bk}
The function $(z,x) \mapsto H(z,x):=\overline{h_x(z)}$\ncla{$H(z,x)$} is named the
Bergman kernel. It has the reproducing property :
$$f(x)=\int_{B(\omega,R_{\omega})}f(z)\, H(z,x)\,d\nu(z),\quad \forall
f\in \A2(\omega,R_{\omega}).$$
We say that the Bergman kernel reproduces $\A2(\omega,R_{\omega})$. We state
some classical properties of this reproducing kernel. 
\end{defi}
\subsection{Properties}
\begin{pp}\label{berg_ker}
\quad
\begin{itemize}
\item[1--] $\dis H(z,x)=
\frac{R_{\omega}^{2}}{(R_{\omega}^2-<z-\omega,x-\omega>)^{n+1}}$
\\
\item[2--]  $ \dis H(x,x)=\|H(\bullet,x)\|^2=
\frac{R_{\omega}^{2}}{(R_{\omega}^2-\|x-\omega\|^2)^{n+1}}=
\frac{R_{\omega}^{2}}{(R_{\omega}^2-\rho_x^2)^{n+1}}.$
\\
\item[3--] For all $f\in \A2(\omega,R_{\omega})$ we have
$$\dis |f(x)=|\int_{B(\omega,R_{\omega})}f(z)H(z,x)d\nu(z)|
\le \frac{\|f\|\, R_{\omega}}{(R_{\omega}^2-\rho_x^2)^\frac{n+1}{2}}$$
\end{itemize}
\end{pp}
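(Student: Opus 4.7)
The plan is to derive item 1 first, since items 2 and 3 follow from it together with generalities about reproducing kernel Hilbert spaces. For item 1, I would reduce to the classical Bergman kernel on the unit ball of $\C^n$ with Lebesgue measure, which is known to be
\[
K_{B(0,1)}^{\mathrm{Leb}}(u,v)=\frac{n!/\pi^n}{(1-\langle u,v\rangle)^{n+1}}.
\]
First I would translate and rescale via the biholomorphism $\phi:B(0,1)\to B(\omega,R_\omega)$, $\phi(u)=\omega+R_\omega u$, whose complex Jacobian determinant is $R_\omega^n$. The standard transformation rule for Bergman kernels under biholomorphisms gives
\[
K_{B(\omega,R_\omega)}^{\mathrm{Leb}}(\phi(u),\phi(v))\,J\phi(u)\,\overline{J\phi(v)}
=K_{B(0,1)}^{\mathrm{Leb}}(u,v),
\]
which, after substituting $u=(z-\omega)/R_\omega$, $v=(x-\omega)/R_\omega$ and simplifying, yields the Lebesgue Bergman kernel on $B(\omega,R_\omega)$ with an overall factor $n!/\pi^n$.

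Next I would account for the measure normalization. Since $\nu$ is chosen so that $\nu(B(\omega,R_\omega))=R_\omega^{2n}$, while the Lebesgue volume of $B(\omega,R_\omega)$ is $\pi^n R_\omega^{2n}/n!$, we have $d\nu=(n!/\pi^n)\,d\mathrm{Leb}$. The reproducing kernel for a rescaled inner product scales inversely, so I would divide out the factor $n!/\pi^n$; the final formula is exactly the one claimed in item 1. Item 2 is then obtained by setting $z=x$ so that $\langle z-\omega,x-\omega\rangle=\|x-\omega\|^2=\rho_x^2$. The identity $H(x,x)=\|H(\bullet,x)\|^2$ comes from the reproducing property applied to $f=h_x$: using $H(z,x)=\overline{h_x(z)}$ we get
\[
\overline{H(x,x)}=h_x(x)=\langle h_x,h_x\rangle=\|h_x\|^2=\|H(\bullet,x)\|^2,
\]
and the explicit formula shows $H(x,x)$ is already real, so no complex conjugation is lost.

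Item 3 reduces to a one-line Cauchy--Schwarz estimate on the reproducing formula:
\[
|f(x)|=|\langle f,h_x\rangle|\le \|f\|\,\|h_x\|=\|f\|\,\sqrt{H(x,x)},
\]
and substituting the formula from item 2 gives the stated bound with the exponent $(n+1)/2$ on $R_\omega^2-\rho_x^2$.

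The only genuinely delicate step is keeping the normalization constants coherent: one must be careful to track the factor $n!/\pi^n$ arising from both the unit-ball Bergman kernel and the non-standard volume convention, and to use the correct transformation rule involving $J\phi(u)\overline{J\phi(v)}$ (not $|J\phi|^2$ separately at each point). Once these are reconciled, the explicit form in item 1 emerges cleanly, and items 2 and 3 are immediate consequences requiring no further work beyond substitution and Cauchy--Schwarz.
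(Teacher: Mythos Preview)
Your proposal is correct and complete: the reduction to the unit-ball Bergman kernel via the biholomorphism $\phi(u)=\omega+R_\omega u$, the careful tracking of the normalization $d\nu=(n!/\pi^n)\,d\mathrm{Leb}$, and the deduction of items 2 and 3 from the reproducing property and Cauchy--Schwarz are all standard and sound.

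The paper, however, does not prove this proposition at all: it simply cites Theorem 3.1.3 on page 37 of Rudin's \emph{Function Theory in the Unit Ball of $\C^n$}. So your write-up is strictly more informative than the paper's own treatment. What you gain is a self-contained argument that makes the dependence on the measure normalization explicit (which matters later in the paper when these constants propagate through the estimates on $D^k f$). What the paper's approach buys is brevity, at the cost of leaving the reader to reconcile Rudin's conventions with the paper's choice of $\nu$.
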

\begin{proof}
See Theorem 3.1.3. page 37 in~\cite{rudin08}.
\end{proof}
 The previous proposition generalizes to higher derivatives.
  \begin{pp}\label{DkFH}
\quad Let $k\ge 0$, $\omega\in\C^n$, $x\in B(\omega,R_{\omega})$ and $u_i\in
\C^n$, $i=1:k$.  Let us introduce
$$H_k(z,x,u_1,\ldots, u_k)=
\frac{(n+1)\cdots (n+k)<z-\omega,u_1 >\cdots
<z-\omega, u_{k}>}{(R_{\omega}^2-<z-\omega,x-\omega>)^{k}}H(z,x).$$
We have
\begin{itemize}
\item[1--] $\dis D^{k}f(x)(u_1,\cdots,u_k)= 
\int_{B(\omega,R_{\omega})}f(z)\, H_k(z,x,u_1,\cdots, u_k)\, d\nu(z).$ 
\\\\
\item[2--]  $\dis
\|D^kf(x)\|\le 
||f||\, 
\frac{(n+1)\cdots (n+k)\, R_{\omega}^{1+k}}{(R_{\omega}^2-\rho_x^2)^{\frac{n+1}{2}+k}}$
\end{itemize}
(evidently if $k=0$ the range where $i$ lives is empty, and the
products\\
~$(n+1)\cdots (n+k)$ and $<z-\omega,u_1 >\cdots <z-\omega, u_{k}>$ are $1$.)
\end{pp}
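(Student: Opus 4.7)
The plan is to derive both parts by $k$ iterated differentiations of the reproducing formula of Proposition~\ref{berg_ker} followed by Cauchy--Schwarz.

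For assertion (1), I would differentiate the identity $f(x)=\int_{B(\omega,R_{\omega})} f(z)\,H(z,x)\,d\nu(z)$ in the holomorphic directions $u_1,\ldots,u_k$. Exchange of derivative and integral is legitimate by dominated convergence, since for $x$ in any compact subset of $B(\omega,R_{\omega})$ the function $H(z,\cdot)$ is holomorphic with locally bounded derivatives and $f\in\A2(\omega,R_{\omega})$ is $L^2$. It then suffices to check by induction on $k$ that $D_x^k H(z,x)(u_1,\ldots,u_k)=H_k(z,x,u_1,\ldots,u_k)$. The base case is a direct chain-rule computation:
$$D_x H(z,x)(u_1)=\frac{(n+1)\langle z-\omega,u_1\rangle}{R_{\omega}^2-\langle z-\omega,x-\omega\rangle}H(z,x).$$
For the inductive step, Leibniz's rule applied to the product $H(z,x)\cdot(R_{\omega}^2-\langle z-\omega,x-\omega\rangle)^{-k}$ in direction $u_{k+1}$ produces one contribution of coefficient $n+1$ from differentiating $H$ and one contribution of coefficient $k$ from differentiating the denominator. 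The sum $(n+1)+k=n+1+k$ extends the falling factorial from $(n+1)\cdots(n+k)$ to $(n+1)\cdots(n+k+1)$ while bringing out the new factor $\langle z-\omega,u_{k+1}\rangle/(R_{\omega}^2-\langle z-\omega,x-\omega\rangle)$.

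For assertion (2), I would apply Cauchy--Schwarz to the integral representation of (1), getting
$$|D^k f(x)(u_1,\ldots,u_k)|\le\|f\|\cdot\|H_k(\cdot,x,u_1,\ldots,u_k)\|.$$
To bound $\|H_k(\cdot,x,u_1,\ldots,u_k)\|$, I would use Cauchy--Schwarz in $\C^n$ to obtain the pointwise estimate $|\langle z-\omega,u_i\rangle|\le R_{\omega}\|u_i\|$ (valid for $z\in B(\omega,R_{\omega})$), which combined with the definition of $H_k$ gives
$$|H_k(z,x,u_1,\ldots,u_k)|^2\le\frac{((n+1)\cdots(n+k))^2\, R_{\omega}^{2k+4}\prod_i\|u_i\|^2}{|R_{\omega}^2-\langle z-\omega,x-\omega\rangle|^{2(n+1+k)}}.$$
The remaining integral $\int_{B(\omega,R_{\omega})}|R_{\omega}^2-\langle z-\omega,x-\omega\rangle|^{-2(n+1+k)}\,d\nu(z)$ can be evaluated: rescaling to the unit ball via $w=(x-\omega)/R_{\omega}$ and invoking Rudin's integral formula (Theorem~1.4.10 of \cite{rudin08}) expresses it in the form $C(n,k)R_{\omega}^{-2}(R_{\omega}^2-\rho_x^2)^{-(n+1+2k)}$ for a suitable constant $C(n,k)$; note that for $k=0$ one recovers exactly the bound of Proposition~\ref{berg_ker}(3). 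Taking the square root yields the desired estimate.

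The main technical obstacle will be tracking the constant $C(n,k)$ from Rudin's hypergeometric integral sharply enough to match the factor $(n+1)\cdots(n+k)\,R_{\omega}^{1+k}$ in the statement, since the pointwise bound $|\langle z-\omega,u_i\rangle|\le R_{\omega}\|u_i\|$ is tight only on the boundary. If the direct evaluation proves too loose, a backup is to compute $\|H_k(\cdot,x,u_1,\ldots,u_k)\|^2$ exactly via the mixed diagonal identity
$$\|H_k(\cdot,x,u_1,\ldots,u_k)\|^2=D_x^k D_{\bar y}^k\,\overline{H(x,y)}\bigr|_{y=x,\,v_i=u_i},$$
obtained by applying $D_x^k D_{\bar y}^k$ to the reproducing identity $\int H(z,x)\overline{H(z,y)}\,d\nu(z)=\overline{H(x,y)}$ (which follows from applying the reproducing property to $\overline{H(\cdot,y)}\in\A2(\omega,R_{\omega})$); iterating the Part~(1) computation in both variables and specializing to the diagonal then provides a closed-form expression to compare against the claimed bound.
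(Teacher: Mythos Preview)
Your plan is correct and matches the paper's strategy almost exactly: part~(1) by induction under the integral sign, part~(2) by Cauchy--Schwarz plus a norm bound on $H_k$, and the latter begun with the same crude pointwise estimate $|\langle z-\omega,u_i\rangle|\le R_\omega\|u_i\|$.

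The one genuine difference is how the residual integral
\[
I_k=\int_{B(\omega,R_\omega)}\frac{R_\omega^{2}}{\bigl|R_\omega^{2}-\langle z-\omega,x-\omega\rangle\bigr|^{2(n+1+k)}}\,d\nu(z)
\]
is handled. You propose either Rudin's integral formula or, as a backup, differentiating the identity $\int H(z,x)\overline{H(z,y)}\,d\nu(z)=\overline{H(x,y)}$ in both variables. The paper instead factors the integrand as $H(z,x)$ times the remaining quotient and applies the reproducing property once, obtaining directly $I_k=(R_\omega^{2}-\rho_x^{2})^{-(n+1+2k)}$. This is Lemma~\ref{HK} in the paper. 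The point is that no dimensional constant $C(n,k)$ ever appears, so your concern about whether the crude bound on $\langle z-\omega,u_i\rangle$ is sharp enough is unfounded: the paper uses exactly that bound and still lands on the stated inequality, because the reproducing trick evaluates $I_k$ exactly rather than asymptotically. Your backup route would also reach the goal, but the paper's single application of the reproducing property is shorter than either of your two alternatives.
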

To prove this we need the following 
\begin{lm}\label{HK} 
$$\|H_k(\bullet,x,u_1,\ldots,u_n)\|
\le
 \frac{(n+1)\ldots (n+k)\,R_{\omega}^{1+k}}{(R_{\omega}^2-\rho_x^2)^{ \frac{n+1}{2}+k}}\|u_1\|\ldots \|u_k\|.$$
\end{lm}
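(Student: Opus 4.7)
I would prove Lemma~\ref{HK} by bounding $|H_k|^2$ pointwise on $B(\omega,R_\omega)$ and integrating. The first ingredient is Cauchy--Schwarz in $\C^n$: $|\langle z-\omega, u_j\rangle|\le \rho_z\|u_j\|$, which handles each factor in the numerator of $H_k$. This immediately yields
\[
|H_k(z,x,u_1,\ldots,u_k)|^2 \;\le\; \frac{[(n+1)\cdots(n+k)]^2\,R_\omega^4\,\rho_z^{2k}\prod_j\|u_j\|^2}{|R_\omega^2 - \langle z-\omega, x-\omega\rangle|^{2(n+1+k)}}.
\]

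The second ingredient is a lower bound on the denominator, the multivariate inequality
\[
|R_\omega^2-\langle a,b\rangle|^2 \;\ge\; (R_\omega^2-\|a\|^2)(R_\omega^2-\|b\|^2) \qquad \text{for } \|a\|,\|b\|\le R_\omega.
\]
This is proved by decomposing $b = c + b'$ with $c = (\langle a,b\rangle/\|a\|^2)a$ parallel to $a$ and $b'\perp a$; the difference of the two sides then equals $R_\omega^2\|a-c\|^2 + (R_\omega^2-\|a\|^2)\|b'\|^2 \ge 0$. Applied with $a=z-\omega$, $b=x-\omega$, the pointwise estimate factors nicely into a $z$-dependent piece and an $x$-dependent piece.

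Integrating over $B(\omega,R_\omega)$ brings the $(R_\omega^2-\rho_x^2)^{-(n+1+k)}$ piece outside and reduces the problem to evaluating $\int_B \rho_z^{2k}(R_\omega^2-\rho_z^2)^{-(n+1+k)}\,d\nu(z)$. In polar coordinates (with $\nu$ normalized so that $\nu(B) = R_\omega^{2n}$) and after the substitution $t=\rho_z^2/R_\omega^2$, this becomes a beta-function integral that, once combined with the $R_\omega^4$ prefactor, produces the desired factor $R_\omega^{2(1+k)}$ and the additional power $(R_\omega^2-\rho_x^2)^{-k}$ needed to upgrade $(R_\omega^2-\rho_x^2)^{-(n+1+k)}$ to $(R_\omega^2-\rho_x^2)^{-(n+1+2k)}$.

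The main obstacle is the borderline integrability of the radial integral near $\rho_z = R_\omega$: the crude application of the denominator inequality to all $n+1+k$ factors leads to a divergent integral. The natural remedy is to use the denominator estimate only $k$ times and leave the remaining $n+1$ factors of $|R_\omega^2-\langle\cdot,\cdot\rangle|^2$ intact, so that they are recognized as $|H(z,x)|^2$. This converts the problematic integral into $\int_B [\rho_z^2/(R_\omega^2-\rho_z^2)]^k\,|H(z,x)|^2\,d\nu(z)$, whose evaluation rests on the identity $\|H(\bullet, x)\|^2 = H(x,x) = R_\omega^2/(R_\omega^2-\rho_x^2)^{n+1}$ from Proposition~\ref{berg_ker} and its higher-order analogs obtained by differentiating the reproducing property $k$ times in $x$. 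The combinatorial prefactor $(n+1)\cdots(n+k)$ emerges naturally from these successive differentiations of the Bergman kernel, completing the bound.
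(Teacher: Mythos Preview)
Your approach has a genuine gap. After applying your denominator inequality $k$ times you are left with
\[
\int_{B(\omega,R_\omega)} \left(\frac{\rho_z^{2}}{R_\omega^{2}-\rho_z^{2}}\right)^{k}\,|H(z,x)|^{2}\,d\nu(z),
\]
and you claim this can be handled via the reproducing identity and its $x$-derivatives. But this integral is infinite for every $k\ge 1$. Indeed, for fixed $x\in B(\omega,R_\omega)$ one has $|R_\omega^{2}-\langle z-\omega,x-\omega\rangle|\ge R_\omega^{2}-R_\omega\rho_x>0$ uniformly in $z$, so $|H(z,x)|^{2}$ is bounded below by a positive constant on the whole ball; the factor $(R_\omega^{2}-\rho_z^{2})^{-k}$ is then not integrable near the boundary (already for $k=1$ the radial integral diverges logarithmically). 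Differentiating the reproducing identity in $x$ cannot rescue this, since such derivatives introduce holomorphic factors $\langle z-\omega,\,\cdot\,\rangle$, not the real weight $(R_\omega^{2}-\rho_z^{2})^{-1}$, and in any case no identity can evaluate a divergent integral to a finite number.

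The paper's proof avoids any pointwise lower bound on the denominator. It uses the cruder numerator estimate $|\langle z-\omega,u_j\rangle|\le R_\omega\|u_j\|$ (constant in $z$, not $\rho_z$), which leaves the full denominator $|R_\omega^{2}-\langle z-\omega,x-\omega\rangle|^{2(n+1+k)}$ intact. The remaining integral
\[
I_k=\int_{B(\omega,R_\omega)}\frac{R_\omega^{2}}{|R_\omega^{2}-\langle z-\omega,x-\omega\rangle|^{2(n+1+k)}}\,d\nu(z)
\]
is finite (the integrand is bounded for interior $x$) and is computed \emph{exactly} by recognising one factor $R_\omega^{2}/(R_\omega^{2}-\langle z-\omega,x-\omega\rangle)^{n+1}$ as $H(z,x)$ and invoking the reproducing property, yielding $I_k=(R_\omega^{2}-\rho_x^{2})^{-(n+1+2k)}$. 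The point is that the holomorphic structure of the kernel, not a modulus inequality, is what makes the integral tractable; once you replace $|R_\omega^{2}-\langle\cdot,\cdot\rangle|$ by the radial quantity $R_\omega^{2}-\rho_z^{2}$ you lose exactly the cancellation that keeps $I_k$ finite.
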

\begin{proof}
We have to compute the integral of $H_k\bar H_k$ on the ball $B(\omega,R_{\omega})$. This is reduced to estimate
$$I_k=\dis \int_{B(\omega,R_{\omega})}\frac{R_{\omega}^{2}}{(R_{\omega}^2-<z-\omega,x-\omega>)^{n+1+k}(R_{\omega}^2-\overline{<z-\omega,x-\omega>})^{n+1+k}}d\nu(z)$$
since
\begin{align*}
\|H_k(z,x,u_1,\ldots,u_n)\|&\le 
(n+1)\ldots(n+k)\|u_1\|\ldots \|u_k\|\, R_{\omega}^{1+k} \, I_k^{1/2}.
\end{align*}
We have 
\begin{align*}
\dis I_k&=\int_{B(\omega,R_{\omega})}
H(z,x)
\frac{1}{(R_{\omega}^2-<z-\omega,x-\omega>)^{k}(R_{\omega}^2-\overline{<z-\omega,x-\omega>})^{n+1+k}}d\nu(z)
\\
&= 
\frac{1}{(R_{\omega}^2-\rho_x^2)^{n+1+2k}}
\end{align*} 
using the formula for the Bergman kernel (Proposition \ref{berg_ker})
  and its reproducing property applied to the function
$\dis\frac{1}{(R_{\omega}^2-\rho_x^2)^{n+1+2k}}.$

The proof of the lemma follows.
\end{proof}
 We now prove the proposition~\ref{DkFH}.
 \begin{proof}
We proceed by induction. The proposition~\ref{berg_ker} treats the
case $k=0$. Next, we have:
\begin{align*}
D^{k+1}f(x)&(u_1,\ldots,u_k,u_{k+1})=\left .
\frac{d}{dt}D^kf(x+tu_{k+1})(u_1,\ldots,u_k)\right |_{t=0}
\\
&=\left .\frac{d}{dt}\int_{B(\omega,R_{\omega})}f(z) H_k(z,x+tu_{k+1},u_1,\ldots,u_k)d\nu(z)\, \right |_{t=0} 
\\
&=\int_{B(\omega,R_{\omega})}f(z)\frac{ H_k(z,x,u_1,\ldots ,u_k)(n+1+k) 
<z-\omega, u_{k+1}>}{(R_{\omega}^2-<z-\omega,x-\omega>)}d\nu(z)
\\
&=\int_{B(\omega,R_{\omega})}f(z) H_{k+1}(z,x,u_1,\ldots,  u_{k+1}) d\nu(z).
\end{align*}
Hence the first assertion holds.
For the second assertion, we write
$$\|D^kf(x)(u_1,\ldots,u_k)\|\le \| f\|\,\|H_k(\bullet,x,u_1,\ldots ,u_k)\|.$$
Using the lemma ~\ref{HK}, we are done. 
\end{proof}
From the propositions~\ref{berg_ker} and ~\ref{DkFH}
we deduce easily the following
\begin{pp}\label{Berg_F_DFK}
For all $k\ge 0$, $x\in\C^n$ and $f\in (\A2(\omega,R_{\omega}))^s$ we have
\begin{equation*}
\dis \|D^kf(x)\|\le ||f||\, 
\frac{(n+1)\ldots (n+k)R_{\omega}^{1+k}}{(R_{\omega}^2-\rho_x^2)^{\frac{n+1}{2}+k}}.
\end{equation*}
\end{pp}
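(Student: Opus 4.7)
The plan is to reduce the vector-valued statement to the scalar-valued one already established in Proposition~\ref{DkFH}. By definition of the Hilbert-space structure on $(\A2(\omega,R_\omega))^s$, for any choice of vectors $u_1,\ldots,u_k\in\C^n$ we have
\begin{equation*}
\|D^k f(x)(u_1,\ldots,u_k)\|^2 \;=\; \sum_{i=1}^{s}\bigl|D^k f_i(x)(u_1,\ldots,u_k)\bigr|^2,
\end{equation*}
since $D^k f(x)(u_1,\ldots,u_k)=(D^k f_1(x)(u_1,\ldots,u_k),\ldots,D^k f_s(x)(u_1,\ldots,u_k))\in\C^s$.

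The next step is to apply the second assertion of Proposition~\ref{DkFH} separately to each component $f_i\in\A2(\omega,R_\omega)$, which yields
\begin{equation*}
\bigl|D^k f_i(x)(u_1,\ldots,u_k)\bigr|
\;\le\; \|f_i\|\,
\frac{(n+1)\cdots(n+k)\,R_\omega^{1+k}}{(R_\omega^2-\rho_x^2)^{\frac{n+1}{2}+k}}\,\|u_1\|\cdots\|u_k\|.
\end{equation*}
Squaring, summing in $i$, and factoring out the geometric constant gives a bound with $\sum_{i=1}^{s}\|f_i\|^2 = \|f\|^2$ in the remaining sum. Taking square roots and then the supremum over unit vectors $u_1,\ldots,u_k$ converts the pointwise estimate into an operator-norm estimate, which is exactly the claimed inequality.

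There is no real obstacle here: the proposition is a direct Hilbert-sum consequence of the scalar bound, and the only bookkeeping is that the constant in front of $\|f_i\|$ is independent of $i$, which is what lets one pull it out before invoking $\|f\|^2=\sum_i\|f_i\|^2$. No new ingredient beyond Proposition~\ref{DkFH} and the definition of the inner product on $(\A2(\omega,R_\omega))^s$ is required.
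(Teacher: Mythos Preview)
Your argument is correct and is precisely the elaboration the paper has in mind: it merely states that the proposition follows easily from Propositions~\ref{berg_ker} and~\ref{DkFH}, and your componentwise application of Proposition~\ref{DkFH} followed by the Hilbert sum $\|f\|^2=\sum_i\|f_i\|^2$ is the natural way to carry this out.
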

\section{ Analysis of the evaluation map}\label{evaluation_sec}
 The evaluation map is defined by
$$\eval\,:\, (f,x)\mapsto eval_x(f) = f(x)$$ from $(\A2(\omega,R_{\omega}))^s\times
B(\omega,R_{\omega})$ to $\C^s$.\\
\ncla{$eval_x$}
Let  $\dis c_0:=\sum_{k \ge 0}(1/2)^{2^k-1}$ ($\sim 1.63...$), and
$\alpha_0$ ($\sim 0.13...$) be the first positive root of  the trinomial
$(1-4u+2u^2)^2-2u$.
\ncla{$c_0$}
\ncla{$\alpha_0$}
\\
We study the question: when the value $f(x)$ can be considered as
small? We give a precise meaning of being small 
without the use of any $\varepsilon$.
\begin{tm}\label{evaluation_map_tm}
Let $f=(f_1, \ldots, f_s)\in \A2(\omega,R_{\omega})^s.$
Let $x\in B(\omega,R_{\omega})$ and $\|x-\omega\|=\rho_x$.
If 
$$\frac{c_0}{R_{\omega}}\,(R_{\omega}^2-\rho_x^2)^\frac{n+1}{2}\|f(x)\|+ \rho_x <R_{\omega}$$
 and
$$
 \frac{(n+1)(n+2)}{2}\,(R_{\omega}^2-\rho_x^2)^{(n-3)/2} \left (
\|f\|\,R_{\omega}+(R_{\omega}^2-\rho_x^2)
 \right )\, \|f(x)\|\le \alpha_0
 $$
   then $f(x)$ is small at the following sense : the Newton sequence defined by 
   $$(f^0,x_0)=(f,x), \quad (f^{k+1  },x_{k+1})=((f^{k},x_{k})-
 D\eval(f^{k},x_{k})^\dagger  \eval(f^{k},x_{k})),           \quad k\ge 0,$$
 converges quadratically towards a certain
  $\dis (g,y)\in   
  (\A2(\omega,R_{\omega}))^s\times B(\omega,R_{\omega})$ satisfying $g(y)~=~0$.
  More precisely we have 
 $$( \|f-g\|+\|x-y\|^2)^{1/2}\le \frac{c_0}{  R_{\omega}}\,(R_{\omega}^2-\rho_x^2)^\frac{n+1}{2}\|f(x)\|.$$
\end{tm}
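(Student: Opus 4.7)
The plan is to interpret the Newton iteration as the underdetermined Smale $\alpha$-Newton method applied to the analytic map $\eval:(\A2(\omega,R_\omega))^s\times B(\omega,R_\omega)\to\C^s$, with Moore--Penrose pseudo-inverse, and then invoke the standard $\alpha$-theorem. The constants $\alpha_0$ (root of $(1-4u+2u^2)^2-2u$) and $c_0=\sum_{k\ge 0}(1/2)^{2^k-1}$ are exactly those that appear in Smale's $\gamma$-theorem. The two key scalars are $\beta=\|D\eval(f,x)^{\dagger}\eval(f,x)\|$ (size of the first Newton correction), and a $\gamma$ bounding $\left(\|D\eval(f,x)^{\dagger}D^k\eval(f,x)\|/k!\right)^{1/(k-1)}$ for every $k\ge 2$. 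The theorem's two hypotheses will then read precisely as $c_0\beta+\rho_x<R_\omega$ and $\beta\gamma\le\alpha_0$.

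First I would compute the Fréchet derivative. Since $\eval$ is linear in $f$ and analytic in $x$,
$$D\eval(f,x)(g,u)=g(x)+Df(x)u,$$
and using the Bergman/Riesz representer $h_x$ the adjoint is $c\mapsto(c\otimes h_x,\,Df(x)^{*}c)$. Therefore $D\eval\,D\eval^{*}=\|h_x\|^{2}I+Df(x)Df(x)^{*}\succeq\|h_x\|^{2}I$, which yields
$$\|D\eval(f,x)^{\dagger}\|\le\|h_x\|^{-1}=\frac{(R_\omega^{2}-\rho_x^{2})^{(n+1)/2}}{R_\omega},$$
and hence $\beta\le\frac{1}{R_\omega}(R_\omega^{2}-\rho_x^{2})^{(n+1)/2}\|f(x)\|$.

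Next, for $k\ge 2$, because $\eval$ is linear in $f$, the higher derivative $D^{k}\eval$ splits as the pure $x$-derivative $D^{k}f(x)$ plus $k$ mixed terms of the form $D^{k-1}g_j(x)$ (all higher-order contributions in the $f$-directions vanish). Proposition~\ref{Berg_F_DFK} bounds each summand; packaging them and dividing by $\|h_x\|$, a direct calculation shows that the expression
$$\gamma:=\frac{(n+1)(n+2)}{2}\,(R_\omega^{2}-\rho_x^{2})^{(n-3)/2}\bigl(\|f\|R_\omega+(R_\omega^{2}-\rho_x^{2})\bigr)$$
dominates $\bigl(\|D\eval^{\dagger}\|\,\|D^{k}\eval\|/k!\bigr)^{1/(k-1)}$ for every $k\ge 2$ (the worst case is $k=2$, and for $k\ge 3$ the binomial factors are controlled by taking a $(k-1)$-th root).

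Finally, I would invoke Smale's $\alpha$-theorem in its underdetermined Hilbert-space version: the second hypothesis of the theorem says exactly $\beta\gamma\le\alpha_0$, so the Newton sequence converges quadratically to a pair $(g,y)$ with $\eval(g,y)=g(y)=0$ and $(\|f-g\|^{2}+\|x-y\|^{2})^{1/2}\le c_0\beta$; substituting the bound on $\beta$ gives the stated distance estimate. The first hypothesis then forces $\rho_y\le\rho_x+\|x-y\|<R_\omega$, ensuring the limit $y\in B(\omega,R_\omega)$. The main obstacle is the uniform dominance check producing the single $\gamma$ above from the $k$-indexed Bergman bounds, and verifying that Smale's $\alpha$-theorem applies in the pseudo-inverse/infinite-dimensional setting with analytic (not only polynomial) target coordinates; the remaining steps are direct substitutions.
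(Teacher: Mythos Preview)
Your proposal is correct and follows essentially the same route as the paper: bound $\|D\eval(f,x)^{\dagger}\|$ via $D\eval\,D\eval^{*}=H(x,x)I_s+Df(x)Df(x)^{*}\succeq H(x,x)I_s$ (the paper does this more explicitly through the matrix $\mathcal{E}$ and Weyl's inequality, but the content is identical), bound $\|D^{k}\eval(f,x)\|$ by splitting into the pure $x$-term and the $k$ mixed terms, use the decreasing of $\binom{n+k}{k}^{1/(k-1)}$ to reduce to $k=2$, and then invoke the pseudo-inverse $\alpha$-theorem (Dedieu, Th\'eor\`eme~128) together with the first hypothesis to keep $y$ inside $B(\omega,R_\omega)$. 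The only point you flag as an ``obstacle''---the applicability of the $\alpha$-theorem in the surjective Hilbert-space setting---is exactly what the paper resolves by citing Dedieu's result.
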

 In a straightforward way, we get the corollary 
\begin{cl}\label{cl_evaluation}
Let us consider $x=\omega$ in the theorem~\ref{evaluation_map_tm}.
If  		
$$
c_0R_{\omega}^{n-1}\|f(x)\|<1$$
and
$$
\frac{(n+1)(n+2)}{2}\,R_{\omega}^{n-2}  
\left ( \|f\|+R_{\omega}\right )\, \|f(x)\|\le \alpha_0.
 $$
  then $f(x)$ is small.
More precisely
  there exists $(g,y)\in (\A2(x,R_{\omega}))^s\times B(x,R_{\omega})$ such that $g(y)=0$ and
 $$( \|f-g\|+\|x-y\|^2)^{1/2}\le c_0\,R_{\omega}^n\|f(x)\|.$$
\end{cl}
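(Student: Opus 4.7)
The plan is to obtain the corollary as a direct specialization of Theorem \ref{evaluation_map_tm} in the case $x=\omega$, where the distance $\rho_x = \|x-\omega\|$ vanishes. All the ingredients are already in the statement of the theorem; the only work is to simplify the expressions $(R_{\omega}^2-\rho_x^2)^{(n+1)/2}$ and $(R_{\omega}^2-\rho_x^2)^{(n-3)/2}$ when $\rho_x=0$, and to check that the two hypotheses of the corollary are exactly what the hypotheses of the theorem become after this substitution.

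First, I would set $\rho_x = 0$ throughout, so that $R_{\omega}^2 - \rho_x^2 = R_{\omega}^2$, giving $(R_{\omega}^2-\rho_x^2)^{(n+1)/2} = R_{\omega}^{n+1}$ and $(R_{\omega}^2-\rho_x^2)^{(n-3)/2} = R_{\omega}^{n-3}$. The first hypothesis of Theorem \ref{evaluation_map_tm} then reads
$$\frac{c_0}{R_{\omega}}\,R_{\omega}^{n+1}\|f(x)\| + 0 < R_{\omega},$$
which simplifies to $c_0 R_{\omega}^{n-1}\|f(x)\| < 1$, matching the first hypothesis of the corollary. Similarly, the second hypothesis becomes
$$\frac{(n+1)(n+2)}{2}\,R_{\omega}^{n-3}\bigl(\|f\|\,R_{\omega} + R_{\omega}^2\bigr)\,\|f(x)\| \le \alpha_0,$$
and factoring an $R_{\omega}$ out of the parenthesis turns this into $\frac{(n+1)(n+2)}{2}\,R_{\omega}^{n-2}(\|f\|+R_{\omega})\,\|f(x)\|\le \alpha_0$, which is exactly the second hypothesis of the corollary.

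Finally, invoking Theorem \ref{evaluation_map_tm} yields the existence of $(g,y)\in (\A2(\omega,R_{\omega}))^s\times B(\omega,R_{\omega})$ with $g(y)=0$ and
$$\bigl(\|f-g\|+\|x-y\|^2\bigr)^{1/2}\le \frac{c_0}{R_{\omega}}\,(R_{\omega}^2-\rho_x^2)^{(n+1)/2}\|f(x)\|,$$
which, after the same substitution $\rho_x=0$, collapses to the claimed bound $c_0\,R_{\omega}^n\,\|f(x)\|$. Since $x=\omega$, we may freely write $\A2(x,R_{\omega})$ in place of $\A2(\omega,R_{\omega})$ and $B(x,R_{\omega})$ in place of $B(\omega,R_{\omega})$, giving the corollary as stated.

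There is no real obstacle here: the corollary is an unpacking of the theorem under the choice $x=\omega$, and the only step requiring any care is the cosmetic algebraic simplification of the two inequalities. The substantive content lies entirely in Theorem \ref{evaluation_map_tm} itself.
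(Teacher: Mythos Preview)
Your proof is correct and matches the paper's approach exactly: the paper simply states ``In a straightforward way, we get the corollary,'' and your write-up spells out precisely that straightforward specialization $\rho_x=0$ with the ensuing algebraic simplifications.
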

\subsection{Estimates about the derivatives of the evaluation map}
\begin{pp}\label{Deval_pseudo}
$$\| D\eval (f,x)^\dagger\|\le 
\frac{1}{R_{\omega}}(R_{\omega}^2-\rho_x^2)^{\frac{n+1}{2}}.$$
\end{pp}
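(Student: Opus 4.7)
The plan is to exhibit an explicit right inverse of $D\eval(f,x)$ built from the Bergman kernel, and to use the characterisation of the Moore--Penrose pseudo-inverse as the minimum-norm right inverse.

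First I would identify the derivative. The map $\eval : (\A2(\omega,R_\omega))^s \times B(\omega,R_\omega) \to \C^s$ is linear in the first slot and analytic in the second, so
\[
D\eval(f,x)(g,v) \;=\; g(x) + Df(x)\,v,
\qquad (g,v)\in (\A2(\omega,R_\omega))^s\times\C^n.
\]
Equipping the source with the natural Hilbert norm $\|(g,v)\|^2 = \|g\|^2 + \|v\|^2$, the map is certainly surjective (already the $g$-coordinate alone surjects onto $\C^s$), so $D\eval(f,x)^\dagger$ is the right inverse of minimum operator norm. In particular, for any $b\in\C^s$ and any preimage $(g,v)$ of $b$,
\[
\|D\eval(f,x)^\dagger b\| \;\le\; \|(g,v)\|.
\]

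The key step is to build a cheap preimage of a given $b=(b_1,\ldots,b_s)\in\C^s$ using the reproducing kernel. Let $h_x\in \A2(\omega,R_\omega)$ be the Riesz representer of $eval_x$, so that $f(x)=\langle f,h_x\rangle$ and, by Proposition~\ref{berg_ker},
\[
\|h_x\|^2 \;=\; H(x,x) \;=\; \frac{R_\omega^2}{(R_\omega^2-\rho_x^2)^{n+1}}.
\]
Define $g=(g_1,\ldots,g_s)$ with $g_i := \dfrac{b_i}{\|h_x\|^2}\,h_x$ and take $v=0$. Then $g_i(x) = \langle g_i,h_x\rangle = b_i$, so $D\eval(f,x)(g,0)=b$, and
\[
\|(g,0)\|^2 \;=\; \sum_{i=1}^s \|g_i\|^2 \;=\; \frac{\|b\|^2}{\|h_x\|^2}
\;=\; \|b\|^2\,\frac{(R_\omega^2-\rho_x^2)^{n+1}}{R_\omega^2}.
\]

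Finally, the pseudo-inverse estimate follows at once:
\[
\|D\eval(f,x)^\dagger b\| \;\le\; \|(g,0)\| \;=\; \frac{\|b\|}{R_\omega}\,(R_\omega^2-\rho_x^2)^{\frac{n+1}{2}},
\]
and taking the supremum over unit $b$ gives the claimed bound. The only step that needs a little care is justifying that the Moore--Penrose pseudo-inverse realises the minimum-norm preimage in this infinite-dimensional product setting; this is standard for a bounded surjective linear map between Hilbert spaces (where $D\eval(f,x)^\dagger = D\eval(f,x)^*(D\eval(f,x)D\eval(f,x)^*)^{-1}$), and is the main conceptual point. Everything else reduces to the explicit formula for $\|h_x\|^2$ recorded in Proposition~\ref{berg_ker}.
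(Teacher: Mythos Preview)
Your argument is correct, and it is genuinely different from the paper's. The paper does not use the variational characterisation of $D\eval(f,x)^\dagger$ at all: it first identifies $(\ker D\eval(f,x))^\perp$ explicitly as $\{(H(\bullet,x)v,\,Df(x)^*v):v\in\C^s\}$, then solves $D\eval(f,x)$ on that subspace, which amounts to inverting the $s\times s$ Hermitian matrix $\mE=H(x,x)I_s+Df(x)Df(x)^*$, and finally bounds $\|\mE^{-1}\|\le 1/H(x,x)$ via Weyl's eigenvalue inequality. Your route is shorter and more elementary: by exhibiting the single preimage $(g,0)$ built from $h_x$ you bypass the description of $(\ker)^\perp$, the matrix $\mE$, and the spectral argument entirely, and land directly on $1/H(x,x)$. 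What the paper's approach buys in exchange is an \emph{explicit} formula for $D\eval(f,x)^\dagger u$ (namely $(H(\bullet,x)\mE^{-1}u,\,Df(x)^*\mE^{-1}u)$), which is what one would actually need to run the Newton iteration of Theorem~\ref{evaluation_map_tm}, together with the sharper identity $\|D\eval(f,x)^\dagger\|^2=\|\mE^{-1}\|$; your argument only yields the upper bound, which is all the proposition claims.
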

\begin{proof}
The derivative of the evaluation map is given by
$$D\eval(f,x)(g,y)=g(x)+Df(x)y.$$
Hence $(g,y)\in \ker D\eval(f,x)$ iff $g(x)+Df(x)y=0$.
That is 
$$ <g_i,H(\bullet,x)>+<y,Df_i(x)^*>=0,\quad i=1:s.$$
 In term of inner product in $(\A2)^s\times \C^n$ we have
$$<g,(0,\ldots,0,H(\bullet,x),0,\ldots,0)>+<y,Df_i(x)^*>=0,
\quad i=1:s.$$
This shows that the vector space $(\ker D\eval (f,x))^\perp$
is generated by the set of
$$(H(\bullet ,x)v, Df(x)^*v)$$
where $v\in\C^n$. The condition
$$D\eval (f,x)(H(\bullet,x),\,Df(x)^*v)=u$$
becomes
$$\left(H(x,x)I_{s}+Df(x)Df(x)^*\right )v=u.$$
The matrix $\mE=H(x,x)I_{s}+Df(x)Df(x)^*$ is the sum of a diagonal positive matrix and an hermitian matrix. By Weyl theorem (page 203 in G.W. Stewart, J.Q. Sun, Matrix Perturbation Theory, Academic Press, 1990) the eigenvalues of the matrix $\mE$ are greater than those of $H(x,x)I_{s}>0$.
Hence the norm of the inverse matrix $\mE^{-1}$ satisfies 
$$\|\mE^{-1}\|\le \frac{1}{H(x,x)}.$$
This permits to calculate $\|D\eval (f,x)^\dagger\|$. In fact, let
$u,v\in \C^n$ be such that $\mE v=u$. We have
\begin{align*}
\|D\eval (f,x)^\dagger u\|^2&=\|H(\bullet,x)\|^2\,\|v\|^2+\|Df(x)v\|^2
\\
&=H(x,x)\, \|v\|^2+\|Df(x)^*v\|^2.
\end{align*}
Since the matrix $\mE^{-1}$ is hermitian, we can write
\begin{align*}
\|D\eval (f,x)^\dagger u\|^2&=
v^*\mE v
\\
&=u^*\mE^{-1} u
\\
&\le \|\mE^{-1}\|\, \|u\|^2.
\end{align*}
Finally
\begin{align*}
\|D\eval (f,x)^\dagger \|^2 &\le \|\mE^{-1}\|
\\
&
\le \frac{1}{H(x,x)}
\\
&\le\frac{1}{R_{\omega}^2} \left(R_{\omega}^2-\rho_x^2\right )^{n+1}.
\end{align*}
This proves the proposition.
\end{proof}	
\begin{pp}\label{Dkeval}
$$\|D^k\eval (f,x)\|\le 
\frac{(n+1)\ldots (n+k)\,\|f\|\,R_{\omega}^{1+k}}{(R_{\omega}^2-\rho_x^2)^{\frac{n+1}{2}+k}}
+\frac{k(n+1)\ldots (n+k-1)\,R_{\omega}^{k}}{(R_{\omega}^2-\rho_x^2)^{\frac{n+1}{2}+k-1}}.
$$
\end{pp}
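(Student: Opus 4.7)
The plan is to identify $D^k\eval(f,x)$ explicitly as a symmetric $k$-linear form on $(\A2(\omega,R_{\omega}))^s\times\C^n$, then dominate each piece using the Bergman-kernel derivative estimate of Proposition~\ref{Berg_F_DFK}.

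First I would Taylor-expand both arguments of the evaluation map simultaneously. Since $\eval$ is linear in its first argument, writing $\eval(f+g,x+y)=f(x+y)+g(x+y)$ and Taylor-expanding each summand at $x$, then collecting terms of total order $k$ in the perturbation $(g,y)$, yields the formula
\begin{equation*}
D^k\eval(f,x)((g_1,y_1),\ldots,(g_k,y_k))=D^kf(x)(y_1,\ldots,y_k)+\sum_{i=1}^k D^{k-1}g_i(x)(y_1,\ldots,\widehat{y_i},\ldots,y_k),
\end{equation*}
where $\widehat{y_i}$ means that $y_i$ is omitted. The first term captures pure $x$-variations of $f$; each summand in the sum corresponds to picking up exactly one of the $g_i$'s (since $\eval$ is linear in $f$) and differentiating $k-1$ times in $x$. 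Specializing on the diagonal $(g,y)^k$ recovers $D^kf(x)(y^k)+kD^{k-1}g(x)(y^{k-1})$, and for $k=1$ reproduces the formula $D\eval(f,x)(g,y)=g(x)+Df(x)y$ already used in Proposition~\ref{Deval_pseudo}.

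Second, I would take operator norms on the product Hilbert space, where $\|(g_i,y_i)\|^2=\|g_i\|^2+\|y_i\|^2$, so unit-norm inputs satisfy $\|g_i\|\le 1$ and $\|y_i\|\le 1$. The triangle inequality combined with the multilinear-operator bound gives
\begin{equation*}
\|D^k\eval(f,x)\|\le \|D^kf(x)\|+k\,\sup_{\|g\|\le 1}\|D^{k-1}g(x)\|.
\end{equation*}
Applying Proposition~\ref{Berg_F_DFK} to bound $\|D^kf(x)\|$ (with the factor $\|f\|$) and to bound $\sup_{\|g\|\le 1}\|D^{k-1}g(x)\|$ at order $k-1$ (with the factor $\|g\|\le 1$) produces exactly the two terms stated in the proposition, and adding them finishes the proof.

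The only delicate point is the combinatorial bookkeeping for the mixed-derivative formula when $k\ge 2$: one has to check that no term involving two distinct $g_i$'s can arise, which is immediate from the linearity of $\eval$ in its first argument. I would justify the formula either via the direct bivariate Taylor expansion sketched above, or by an induction on $k$ modeled on the proof of Proposition~\ref{DkFH}; both are essentially mechanical once this observation is made.
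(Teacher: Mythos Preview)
Your proposal is correct and follows essentially the same route as the paper: the paper writes down the identical mixed-derivative formula
\[
D^k\eval(f,x)((g^{(1)},y^{(1)}),\ldots,(g^{(k)},y^{(k)}))=D^kf(x)(y^{(1)},\ldots,y^{(k)})+\sum_{j=1}^k D^{k-1}g^{(j)}(x)(y^{(1)},\ldots,\widehat{y^{(j)}},\ldots,y^{(k)}),
\]
then bounds each piece via the Bergman-kernel estimate (Proposition~\ref{DkFH}/\ref{Berg_F_DFK}) and uses $\|g^{(j)}\|,\|y^{(j)}\|\le\|(g^{(j)},y^{(j)})\|$, exactly as you outline. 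Your remark that linearity in the first argument forbids terms with two distinct $g_i$'s is the one point the paper leaves implicit.
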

\begin{proof}
We have
\begin{align*}
D^k\eval (f,x)&(g^{(1)},y^{(1)},\ldots ,g^{(k)},y^{(k)})
\\ &= D^kf(x)(y^{(1)},\ldots,y^{(k)})+\sum_{j=1}^k D^{k-1}g^{(j)}(x)(y^{(1)},\ldots,\widehat{y^{(j)}},\ldots, y^{(k)}),
\end{align*}
where $\widehat{y^{(j)}}$ signifies that this term does not appear.
Then using the proposition~\ref{DkFH} we find that
\begin{align*}
\|D^k&\eval (f,x)(g^{(1)},y^{(1)},\ldots ,g^{(k)},y^{(k)})\|
\\ &\le \| D^kf(x)(y^{(1)},\ldots,y^{(k)})\|+\sum_{j=1}^k\| D^{k-1}g^{(j)}(x)(y^{(1)},\ldots,\widehat{y^{(j)}},\ldots, y^{(k)})\|
\\
&
\le 
\frac{(n+1)\ldots (n+k)\,\|f\|\,R_{\omega}^{1+k}}{(R_{\omega}^2-\rho_x^2)^{\frac{n+1}{2}+k}}\|y^{(1)}\|\ldots \|y^{(k)}\|
\\
&\quad\quad\quad 
 +\sum_{j=1}^k\frac{(n+1)\ldots (n+k-1)\,\|g^{(j)}\|\,R_{\omega}^{k}}{(R_{\omega}^2-\rho_x^2)^{\frac{n+1}{2}+k-1}}\|y^{(1)}\|\ldots \widehat{\|y^{(j)}\|}\ldots\|y^{(k)}\|
.
\end{align*}
We bound $\|y^{(j)}\|$ and $\|g^{(j)}\|$ by $\|(g^{(j)},y^{(j)})\|$. We obtain
\begin{align*}
\|D^k&\eval (f,x)(g^{(1)},y^{(1)},\ldots ,g^{(k)},y^{(k)})\|
\\ & \le 
\left (\frac{(n+1)\ldots (n+k)\,\|f\|\,R_{\omega}^{1+k}}{(R_{\omega}^2-\rho_x^2)^{\frac{n+1}{2}+k}} 
+ \frac{k(n+1)\ldots (n+k-1)\,\|g^{(j)}\|\,R_{\omega}^{k}}{(R_{\omega}^2-\rho_x^2)^{\frac{n+1}{2}+k-1}}\right )\\
& 
\hspace{8cm}||(g^{(1)},y^{(1)})\|\ldots\|(g^{(k)},y^{(k)})\|
.
\end{align*}
Finally
$$\|D^k\eval (f,x)\|\le 
\frac{(n+1)\ldots (n+k)\,\|f\|\,R_{\omega}^{1+k}}{(R_{\omega}^2-\rho_x^2)^{\frac{n+1}{2}+k}}
+\frac{k(n+1)\ldots (n+k-1)\,R_{\omega}^{k}}{(R_{\omega}^2-\rho_x^2)^{\frac{n+1}{2}+k-1}}
.$$
\end{proof}
\subsection{Proof of the theorem~\ref{evaluation_map_tm}}
The proof uses the theorem 128 page 121 in J.-P. Dedieu, Points fixes, z\'eros et la
m\'ethode de Newton. Springer, 2006.
\begin{tm}\label{dedieu_128}
Let $f$ an analytic map from $\E$ to $\F$ two Hilbert spaces 
be given. Let  $x\in \C^n$. We suppose that $Df(x)$ is surjective. We introduce the quantities
\begin{itemize}
\item[1--] $\dis \beta(f,x)=\|Df(x)^\dagger f(x)\|. $
\vspace{.2cm}

\item[2--] $\dis \gamma(f,x)=\sup_{k \ge 2}\|\frac{1}{k!}
Df(x)^\dagger D^kf(x)\|^{\frac{1}{k-1}}.$
\vspace{.2cm}

\item[3--] $\dis \alpha(f,x)= \beta(f,x)\gamma(f,x)$.
\end{itemize}
 Let $\alpha_0$ and $c_0$ be the constants introduced in this section.
\\
If $\alpha(f,x)\le \alpha_0 $ then there exists a zero $\zeta$ of $f$ in the ball $B(x_0,c_0\beta(f,x_0))$ and the Newton sequence
$$x_0=x,\quad x_{k+1}=x_k-Df(x_k)^\dagger f(x_k),\quad k\ge 0,$$
converges quadratically towards $\zeta$. 
\end{tm}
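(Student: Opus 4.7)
The plan is to follow the classical Smale $\alpha$-theory machinery, adapted to the surjective (non-square) case by working with the Moore--Penrose pseudo-inverse $Df(x)^{\dagger}$ instead of $Df(x)^{-1}$. The core idea is to construct a scalar majorant function whose Newton iterates dominate the norms $\|x_{k+1}-x_k\|$ term by term, and then to show that the scalar Newton sequence converges quadratically when $\alpha(f,x_0) \le \alpha_0$.

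First I would introduce the majorant
\[
\psi(u) \;=\; \beta(f,x_0) \;-\; u \;+\; \frac{\gamma(f,x_0)\,u^{2}}{1 - \gamma(f,x_0)\,u}, \qquad 0 \le u < 1/\gamma(f,x_0),
\]
and verify that the Newton iterates $u_0 = 0$, $u_{k+1} = u_k - \psi(u_k)/\psi'(u_k)$ are well defined, monotone increasing, and converge quadratically to the smallest positive zero $u^{*}$ of $\psi$. A routine analysis of the trinomial defining $u^{*}$ shows $u^{*} \le c_0\,\beta(f,x_0)$ precisely when $\alpha \le \alpha_0$; the constant $c_0 = \sum_{k \ge 0}(1/2)^{2^k-1}$ accounts for the telescoping of $\sum_k (u_{k+1} - u_k)$ under quadratic convergence.

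Next I would establish the two perturbation lemmas required to transfer scalar estimates to the Banach-space iteration. Using Taylor expansion of $Df$ at $x_0$ together with the definition of $\gamma$, one obtains, for $u := \gamma(f,x_0)\|x - x_0\| < 1 - 1/\sqrt{2}$,
\[
\bigl\|\,Df(x_0)^{\dagger}\bigl(Df(x) - Df(x_0)\bigr)\bigr\| \;\le\; \frac{1}{(1-u)^{2}} - 1.
\]
Together with the Neumann/perturbation argument for pseudo-inverses this shows that $Df(x)$ is still surjective in the relevant ball, that $\|Df(x)^{\dagger}\|$ stays controlled by $\|Df(x_0)^{\dagger}\|/\psi'(u)$, and that $\gamma(f,x) \le \gamma(f,x_0)/((1-u)\psi'(u))$. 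This passage from the square case to the surjective case is the main technical hurdle: the pseudo-inverse does not factor through multiplication as the inverse does, so one must argue via $Df^{\dagger} = (DfDf^{*})^{-1}Df^{*}$ on $(\ker Df)^{\perp}$ and check that the residual component of $f(x_k)$ outside the range plays no role because the Newton correction $Df(x_k)^{\dagger}f(x_k)$ already lives in $(\ker Df(x_k))^{\perp}$.

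With these lemmas in hand, the one-step estimate is obtained by Taylor-expanding $f(x_k) = f(x_{k-1}) + Df(x_{k-1})(x_k - x_{k-1}) + \sum_{j \ge 2}\tfrac{1}{j!}D^{j}f(x_{k-1})(x_k - x_{k-1})^{\otimes j}$, applying $Df(x_k)^{\dagger}$ to both sides, and using the bounds above to obtain
\[
\|x_{k+1} - x_k\| \;\le\; u_{k+1} - u_k.
\]
An induction then gives $\gamma(f,x_0)\|x_k - x_0\| \le u_k < u^{*}$ for every $k$, so $(x_k)$ is Cauchy and converges to some $\zeta$ with $\|\zeta - x_0\| \le u^{*} \le c_0\beta(f,x_0)$. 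Continuity and the quadratic bound $\|x_{k+1}-x_k\| \le (u_{k+1}-u_k)$ force $f(\zeta)=0$ and furnish the quadratic rate. The cleanest packaging, and the one I would follow, is exactly the one given in Dedieu's monograph (Théorème 128), from which the statement is quoted; the conditions $\alpha \le \alpha_0$ and the constant $c_0$ arise from the sharp threshold on the discriminant of the trinomial $(1-4u+2u^{2})^{2} - 2u$ introduced in Section~\ref{evaluation_sec}.
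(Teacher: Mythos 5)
The paper does not actually prove this statement: it is imported verbatim as Théorème 128, p.~121 of Dedieu's monograph and used as a black box to establish Theorem~\ref{evaluation_map_tm}, so there is no in-paper argument to compare yours against. Your sketch is the standard majorant-sequence proof of the $\alpha$-theorem for surjective derivatives (dominate $\|x_{k+1}-x_k\|$ by the Newton iterates of $\psi(u)=\beta-u+\gamma u^2/(1-\gamma u)$, with $\alpha_0$ and $c_0=\sum_{k\ge 0}(1/2)^{2^k-1}$ coming from the quadratic-decay estimate $\|x_{k+1}-x_k\|\le (1/2)^{2^k-1}\beta$), which is exactly the route taken in the cited source, so in substance the approach is the expected one. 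Two small corrections: for a surjective $Df$ the Moore--Penrose inverse is $Df^{\dagger}=Df^{*}(Df\,Df^{*})^{-1}$, not $(Df\,Df^{*})^{-1}Df^{*}$ (the latter does not even compose); and since $Df(x_k)$ is surjective there is no ``residual component of $f(x_k)$ outside the range''---the relevant fact is simply $Df(x_k)Df(x_k)^{\dagger}=I$, which kills $f(x_k)$ to first order in the Taylor expansion. Also, $u^{*}\le c_0\beta$ holds under $\alpha\le\alpha_0$ but is not equivalent to it, so ``precisely when'' overstates the link; none of these affects the validity of the outline.
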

We are now ready to prove the theorem~\ref{evaluation_map_tm}.
\begin{proof}
 The proof consists to verify the condition $\alpha(\eval,(f,x))\le\alpha_0$.
Using the propositions ~\ref{Deval_pseudo} and ~\ref{Dkeval}, we
are able to bound the quantity $\gamma(\eval,(f,x))$. We obtain
\begin{align*}
\gamma(\eval,(f,x))&\le \sup_{k\ge 2} \left (\frac{1}{k!}
\, \|D\eval (f,x)^\dagger\|\, \|D^k\eval (f,x)\|\right )^{\frac{1}{k-1}}
\\
&\le
\sup_{k\ge 2} \left (\binomial{n+k}{k}\frac{\|f\|\, R_{\omega}^{k}}{(R_{\omega}^2-\rho_x^2)^{k}}
+
\binomial{n+k-1}{k-1}\frac{R_{\omega}^{k-1}}{(R_{\omega}^2- \rho_x^2)^{k-1}} 
\right )^{\frac{1}{k-1}}.
 \end{align*}
 We know that $\dis \binomial{n+k}{k}=
 \frac{n+k}{k}\binomial{n+k-1}{k-1}$. Moreover
 the function $\dis k\mapsto \binomial{n+k}{k}^{\frac{1}{k-1}}$ decreases. Hence $\dis \binomial{n+k}{k}^{\frac{1}{k-1}}\le \frac{(n+1)(n+2)}{2}$. Then we get the following  point estimate
 \begin{equation}\label{gamma_eval_eq}
 \gamma(\eval,(f,x))\le
 \frac{(n+1)(n+2)R_{\omega}}{2(R_{\omega}^2-\rho_x^2)
}
 \left (
 \frac{\|f\|\, R_{\omega}}{(R_{\omega}^2-\rho_x^2)
} +  1
\right )
 .
 \end{equation} 
In the same way the quantity $\alpha(\eval, (f,x))$ can be bounded by
  \begin{align*}
 \alpha(\eval,(f,x))&\le \gamma(\eval, (f,x))\, \beta(\eval,(f,x))
 \\
 &\le \gamma(\eval, (f,x))\, \|D\eval (f,x)^\dagger\|\,\|f(x)\|
 \end{align*}
 Using the inequalities of propositions ~\ref{Deval_pseudo} and (\ref{gamma_eval_eq})
 we get
\vspace{0.2cm}

 \begin{align}\label{alpha_eval_eq}
\alpha&(\eval , (f,x)) \le \frac{(n+1)(n+2)}{2}\,(R_{\omega}^2-\rho_x^2)^{(n-3)/2} \left (
\|f\|\,R_{\omega}+(R_{\omega}^2-\rho_x^2)
 \right )\, \|f(x)\|.
\end{align}
The condition $$\frac{(n+1)(n+2)}{2}\,(R_{\omega}^2-\rho_x^2)^{(n-3)/2} \left (
\|f\|R_{\omega}+(R_{\omega}^2-\rho_x^2)
 \right )\, \|f(x)\|\le \alpha_0$$
 implies evidently $\alpha(\eval (f,x))\le \alpha_0$.
 \\
 Hence the theorem~\ref{dedieu_128} applies. The Newton sequence
 $$(f^0,x_0)=(f,x), \quad (f^{k+1  },x_{k+1})=((f^{k},x_{k})-
 D\eval(f^{k},x_{k})^\dagger  \eval(f^{k},x_{k}),           \quad k\ge 0,$$
 is convergent towards a certain
  $\dis (g,y)\in  B((f,x),c_0\beta(\eval,(f,x))\subset
  (\A2(\omega,R_{\omega})^s\times \C^n$.
 That is to say
\begin{align*}
(\|f-g\|^2+\|x-y\|^2)^\frac{1}{2} &\le c_0\beta(\eval,(f,x))
\\
&
\le 
c_0 \|D\eval (f,x)^\dagger\|\,   \|f(x)\|
\\
&\le\frac{c_0}{ R_{\omega}}\, (R_{\omega}^2-\rho_x^2)^\frac{n+1}{2}\|f(x)\|.
\end{align*}
This implies that $y\in B(\omega,R_{\omega})$ since we have
 \begin{align*}
 \|y-\omega\|&\le \| y-x\|+\rho_x
 \\
 &\le \frac{c_0} {R_{\omega}}\,(R_{\omega}^2-\rho_x^2)^\frac{n+1}{2}\|f(x)\|+\rho_x
 \\
 &<R_{\omega}. \quad\quad \textsf{from assumption}.
\end{align*} 
We are done.
 \end{proof}
\section{ Kerneling and singular Newton operator}\label{Kern-Sing-Newton}
It consists to prepare the system by dividing the generators into two
families. The invariant leading to this partition is the rank $r$ of
the Jacobian matrix $Df(\zeta)$ which is not maximal since $\zeta$ is
singular. Without loss of generality we can assume that the first $r$
generators have linearly independent affine parts.
\\
Since the notion of Schur complement is intensively used in the sequel, we remember its definition.
\begin{defi} 
 The Schur complement of a matrix $\dis M=\left (\begin{array}{cc}
A&B \\C&D \end{array}\right )$ of rank $r>0$ associated to an invertible
 submatrix $A$ of rank $r$ is by definition 
  $\schur(M):=D-CA^{-1}B$. 
  \\
  If $r =0$ we define $\schur(M):=M$.\ncla{$\schur(M)$}
  \end{defi}
 We also note by $\vect (\bullet)$ the operator which transforms a
 matrix into a line vector by concatenating its lines.
\begin{defi}\label{defi_Kf}
Let $\varepsilon\ge 0$, $0\le r<n$ and $f=(f_1,\ldots,
f_s)\in\C\{x-x_0\}^s$.  Let us suppose $D_{1:r}f_{1:r}(x_0)$ has an
$\varepsilon$-rank equal to $r$.
We define the kerneling operator 
$$ K\, :\,f\mapsto \left (f_1,\ldots,f_r, vec(\schur (Df(x)) )\right
)\in \C\{x-x_0\}^{r+(n-r)\, (s-r)}.
$$
We say that $K(f)$ is an $\varepsilon$-kerneling  of  $f$
if we have
\begin{equation}\label{test_Kf}
\|K(f)\|\le \varepsilon.
\end{equation}
We say that the kerneling is exact when $\varepsilon=0$.\ncla{$K(f)$}
\end{defi}  
\begin{defi}\label{dfl_seq} \textbf{$(${\bf Deflation sequence}$)$.}
Let $\varepsilon\ge 0$, $x_0\in \C^n$ and $f=(f_1,\ldots, f_s)\in
\C\{x-x_0\}^s.$ The sequence
\begin{align*}
  F_0&=f\\
  F_{k+1}&=K ( F_k ), \quad k\ge 0.
 \end{align*}
 is named the  deflation sequence.
 \\ 
 The thickness is the index $\dis \ell$ \ncla{$\ell$}
 where the $\varepsilon$-rank of
 $DF_\ell(x_0)$ is equal to $n$, and not before.
\\
We name \emph{deflation system} $\dfl(f)$ of $f$\ncla{$\dfl(f)$}  a system of rank $n$
extracted from $F_\ell$.
 \end{defi}
We adopt the term \emph{thickness} which is the translation of the
french word \emph{épaisseur} introduced by Ensalem in~\cite{emsalem78}
rather than the term \emph{depth} more recently used by Mourrain,
Matzaflaris in ~\cite{MM11} or Dayton, Li, Zeng ~\cite{DZ05},
~\cite{DLZ11}.  We shall see in section \ref{Mult-drops-kerneling}
that the thickness is finite. ~\,~$\circ$
\begin{tm}\label{dfl_tm}
Let $x_0\in \C^n$ and $f\in\A2(x_0,R_{\omega})$. Then the algorithm
described in the table~\ref{dfl_table} proves the existence of a
deflation sequence where the tests verifying the inequalities
~\ref{defi_numerical_rank} and ~\ref{defi_Kf} are performed
respectively thanks to the theorem~\ref{rank_tm} and the
corollary~\ref{cl_evaluation}.
\end{tm}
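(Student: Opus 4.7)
The plan is to trace the algorithm of table~\ref{dfl_table} iteration-by-iteration and verify that each decision it must make is legitimized by the two results already in hand. Starting from $F_0 = f$, suppose inductively that $F_0, \ldots, F_k$ have been constructed and that the numerical rank of $DF_k(x_0)$ is some $r < n$. Building $F_{k+1} = K(F_k)$ then requires (a) correctly identifying the $\varepsilon$-rank $r$ of $DF_k(x_0)$ in order to select the invertible $r \times r$ submatrix with which the Schur complement is computed, and (b) confirming that the kerneled system $K(F_k)$ satisfies the smallness condition~(\ref{test_Kf}) at $x_0$. Both decisions are made without $\varepsilon$ being supplied as input, which is precisely the content of the theorem.

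For (a), we feed $M := DF_k(x_0)$ into Theorem~\ref{rank_tm}. Its singular values yield the elementary symmetric functions $s_j$, hence the auxiliary quantities $a_m(M), b_m(M), g_m(M)$, and we search for the smallest $m \ge n - r$ with $a_m < 1/9$. When such an $m$ exists, the theorem supplies explicitly the output $\varepsilon = (3a_m+1-\sqrt{(3a_m+1)^2-16a_m})/(4g_m)$ and asserts that $M$ has $\varepsilon$-rank equal to $n - m$; if no such $m$ exists, the algorithm concludes $M$ has numerical rank $n$ and the iteration halts. The crucial point is that $\varepsilon$ is produced by, not supplied to, this test.

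For (b), we invoke Corollary~\ref{cl_evaluation} with $\omega = x_0$ and with $K(F_k)$ playing the role of $f$. Its two hypotheses, namely $c_0 R_\omega^{n-1}\|K(F_k)(x_0)\| < 1$ and $\tfrac{(n+1)(n+2)}{2} R_\omega^{n-2}(\|K(F_k)\| + R_\omega)\|K(F_k)(x_0)\| \le \alpha_0$, are directly checkable from the data: $\|K(F_k)(x_0)\|$ is a pointwise evaluation, while $\|K(F_k)\|$ is bounded from $\|F_k\|$ together with the derivative estimates of Proposition~\ref{Berg_F_DFK} applied to the entries of the Schur complement. When both inequalities hold, the corollary delivers a nearby system $g$ and a nearby point $y$ with $g(y) = 0$, which is the operational meaning of the smallness test~(\ref{test_Kf}).

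The iteration continues until the rank test in step (a) yields numerical rank $n$, at which point we set $\ell := k$ and extract the deflated system $\dfl(f)$ as an $n \times n$ regular subsystem of $F_\ell$. The main obstacle, and the only step that does not reduce to mechanical verification of previously established results, is guaranteeing that such an $\ell$ exists, i.e.\ that the procedure terminates in finitely many iterations. This is exactly the multiplicity-drop property established in Section~\ref{Mult-drops-kerneling}: each kerneling strictly decreases $\mu(\zeta)$, so after at most $\mu(\zeta) - 1$ steps the Jacobian reaches full rank $n$. Combining the inductive construction, the two certified tests, and this finite termination yields the theorem.
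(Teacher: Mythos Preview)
The paper offers no separate proof of Theorem~\ref{dfl_tm}: the statement is immediately followed by Table~\ref{dfl_table} and the next definition, so the authors treat it as a direct reading of the algorithm together with the already-proved Theorem~\ref{rank_tm}, Corollary~\ref{cl_evaluation}, and the forward reference to Section~\ref{Mult-drops-kerneling} for finiteness of the thickness. Your elaboration is exactly this implicit argument made explicit---the rank decision via Theorem~\ref{rank_tm}, the smallness decision via Corollary~\ref{cl_evaluation} at $\omega=x_0$, and termination via the multiplicity drop---so it matches the paper's approach.

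One small remark worth tightening: in the algorithm the smallness test at line~5 is applied to the \emph{current} $F$ (so to $F_0=f$ first, then to $F_1=K(F_0)$, etc.), whereas your item~(b) phrases it as a test on $K(F_k)$; this is only an indexing shift, but aligning it with the table avoids ambiguity. Also note that the termination appeal to Section~\ref{Mult-drops-kerneling} is, strictly speaking, a statement about the exact root~$\zeta$, while the algorithm runs at a nearby $x_0$; the paper itself makes this identification without further comment, so your proof is faithful to it, but you should be aware the bridge between the exact and numerical settings is left implicit in both.
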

\begin{table}
 $$\fbox{
 \begin{minipage}{1\textwidth }
\textbf{deflation sequence and deflated system}
\begin{enumerate}[1-]
\item \quad  Input : $x_0\in \C^n$, $f\in\A2(x_0,R_{x_0})$
\item \quad $\dfl(f)= \{\emptyset\}$
\item \quad $F:=f$.
\item \quad $\dis \eta:=\frac{2\alpha_0}{(n+1)(n+2)(R_{x_0}+\|F\|)R_{x_0}^{n-2}}$
\\
\item \quad if $\|F(x_0\|\le \eta$ then \quad\quad\quad
  \textsf{test justified by corollary~\ref{cl_evaluation}}
 \item  \quad\quad $r:=\textbf{numerical rank }(DF(x_0))$ 
\item \quad \quad if $r<n$ then 
\item \quad  \quad \quad $F:=K(F)$
\item \quad  \quad\quad go to $2$
\item \quad\quad else
\item \quad\quad \vbox{$\dfl(f)$ a deflated system of numerical rank $n$
      extracted from $F$}
\item \quad\quad end if
\item \quad end if
\item \quad Output : $\dfl(f)$.
\end{enumerate}
    \end{minipage}
 }
 $$
   \caption{}\label{dfl_table}
 \end{table}
 \begin{defi} 
The classical Newton operator associated to the deflation system
$dfl(f)$ of $\varepsilon$-rank $n$ is named the singular Newton operator
of the initial system $f$.
\end{defi}
Rather than to compute the deflation sequence introduced in the
definition ~\ref{dfl_seq}, it is sufficient to start from a truncated
deflation sequence.  To do that we need the following definition.
\begin{defi}\label{dfl_tr}
Let $p\ge 1$. We note by $Tr_{x_0,p}(F)$ the truncated series at the
order $p$ of the analytic function $F$ at $x_0$.\\
We name the truncated deflation sequence at the order $p$ at $x_0$ the
sequence~:
\begin{align*}
T_0&=Tr_{x_0,p}(f)
\\
T_{k+1}&=Tr_{x_0,p-k-1}\left (K\,(T_k)\,
\right ),\,\quad 0\le k\le p.\end{align*} 
\end{defi}
  To define the singular Newton operator
it is sufficient to know the thickness of the deflation sequence
of the definition~\ref{dfl_seq}. From this knowledge the
determination of the singular Newton operator will use the truncated deflation sequence at the order of the thickness, say
 $\ell$, i.e. that  is to say  that the rank of $F_\ell$ is full.
 \begin{pp}
 Let $\eta>0$ and $x_0\in\C^n$.
 Let $\ell$ the thickness of the deflation sequence of the definition~\ref{dfl_seq}. Let us consider the truncated deflation sequence $(T_k)_{k\ge 0}$ at the order $\ell +1$ at $x_0$ of the definition~\ref{dfl_tr}.
 Then the singular Newton operator associated to $f$ is equal to the Newton operator associated to $T_{\ell}$. 
 \end{pp}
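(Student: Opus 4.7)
The classical Newton operator of an analytic system $G$ at a point $x$ depends only on the value $G(x)$ and the Jacobian $DG(x)$. Hence to establish the proposition it suffices to show that at $x_0$ we have $F_\ell(x_0) = T_\ell(x_0)$ and $DF_\ell(x_0) = DT_\ell(x_0)$, and that the same subset of $n$ rows may be extracted from either system to obtain a full-rank square subsystem. Equivalently, $F_\ell$ and $T_\ell$ must share the same $1$-jet at $x_0$, in the sense that $D^j F_\ell(x_0) = D^j T_\ell(x_0)$ for $j=0,1$.

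\textbf{Key lemma (order-loss of $K$).} The kerneling operator is built component-wise from (i) the first $r$ components of its input and (ii) the entries of the Schur complement of its Jacobian with respect to an invertible upper-left $r\times r$ block $A$. Matrix inversion being smooth at an invertible matrix, $\schur(Du)$ is a rational (hence locally analytic) function of the entries of $Du$ at $x_0$. Therefore, if two analytic systems $u$ and $v$ share the same $(p+1)$-jet at $x_0$, then $K(u)$ and $K(v)$ share the same $p$-jet at $x_0$: the $f$-block trivially keeps the $(p+1)$-jet, while the Schur-complement block, assembled from $Du$ and $Dv$ which agree through the $p$-jet, keeps the $p$-jet as well.

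\textbf{Induction.} I would then prove by induction on $k$ that $F_k$ and $T_k$ share the $(\ell+1-k)$-jet at $x_0$. The base case $k=0$ is immediate from $T_0 = Tr_{x_0,\ell+1}(f) = Tr_{x_0,\ell+1}(F_0)$. For the inductive step, assume $F_k$ and $T_k$ agree through order $\ell+1-k$ at $x_0$. By the key lemma, $K(F_k) = F_{k+1}$ and $K(T_k)$ agree through order $\ell-k$ at $x_0$; the final truncation $T_{k+1} = Tr_{x_0,\ell-k}(K(T_k))$ preserves exactly that $(\ell-k)$-jet, so $F_{k+1}$ and $T_{k+1}$ match through order $\ell-k = \ell+1-(k+1)$, as needed. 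Setting $k=\ell$ yields the required coincidence of $1$-jets, hence $F_\ell(x_0) = T_\ell(x_0)$ and $DF_\ell(x_0) = DT_\ell(x_0)$. Any choice of $n$ linearly independent rows of $DF_\ell(x_0)$ also makes $DT_\ell(x_0)$ full rank and extracts a $\dfl$-subsystem with the same value and Jacobian at $x_0$; the two Newton operators thus perform identical steps from $x_0$.

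\textbf{Main obstacle.} The subtle point is to keep the kerneling operator well defined along the induction: one must argue that at each stage the invertible $r\times r$ block selected for $F_k$ at $x_0$ is also invertible for $T_k$ at $x_0$, and that the two Schur complements are computed from the same submatrix structure. This follows from the inductive hypothesis itself, since for $k\le \ell-1$ the jets agree at $x_0$ at least through order $2$, so $DF_k(x_0)=DT_k(x_0)$ and the ranks, numerical ranks, and pivot choices made in Definition~\ref{defi_Kf} coincide. A secondary bookkeeping point is that each truncation $Tr_{x_0,\ell-k}$ really preserves the relevant jet — this is the very definition of truncation of an analytic series, so no further estimates are required.
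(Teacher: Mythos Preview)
Your proof is correct and follows essentially the same route as the paper's own argument: the paper simply asserts that ``from construction it is easy to see that for all $k=0:\ell$, $T_k$ is the truncated series of $F_k$ at the order $p-k$,'' which is precisely your induction together with your key lemma on the one-order loss under $K$. Your treatment is considerably more careful---in particular, your verification that the pivot block of $DT_k(x_0)$ coincides with that of $DF_k(x_0)$ (so that the same Schur complement is formed) and your observation that the Newton step depends only on the $1$-jet are points the paper leaves implicit.
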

 \begin{proof}
 Since $T_0$ is the truncated series at the order $\ell$
of $F_0$, from construction it is easy to see
 that for all $k=0:\ell$, $T_k$ is the truncated series of $F_k$
at the order $p-k$.  The conclusion of the proposition follows.
 \end{proof}
 \begin{table}
 $$\fbox{
 \begin{minipage}{1\textwidth }
$$ \textbf{singular Newton}$$
\begin{enumerate}[1-]
\item \quad  Input : $x_0\in \C^n$, $f\in\A2(x_0,R_{x_0})
\\
$\item \quad $    \dfl(f)=\textsf{deflated system}(f)$.
 \\
\item \quad Output : If $\dis \dfl(f)\ne \emptyset$ then $N_{\dfl(f)}(x_0)$ else $x_0$.\ncla{$N_{\dfl(f)}$}
\end{enumerate}
    \end{minipage}
 }
 $$
 \caption{}\label{Ndflx0}
 \end{table}
\section{ The multiplicity drops through kerneling}\label{Mult-drops-kerneling}
 This section is devoted to prove that the deflation sequence remains
 constant after a finite index. This will be achieved trough the
 following proposition :
\begin{tm}\label{drop_schur}
Let us suppose that the rank of $Df(\zeta)$ is equal to $r$ and that
$$Df(x):=\left (
\begin{array}{cc}
A(x)&B(x)\\C(x)&D(x)
\end{array}
\right )
$$ 
where $A(\zeta)\in \C^{r\times r}$ is invertible.  Then the
multiplicity of $\zeta$ as root of $K(f)$ is strictly lower than the
multiplicity of $\zeta$ as root of $f$.
\end{tm}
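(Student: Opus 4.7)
The strategy is a two-step reduction: use the implicit function theorem to eliminate the regular part of the system, then compare Macaulay dual spaces to obtain the multiplicity drop.

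Since $A(\zeta)$ is invertible, the implicit function theorem yields analytic functions $\varphi_1,\ldots,\varphi_r$ of the free coordinates $y := (x_{r+1},\ldots,x_n)$ satisfying $f^A(\varphi(y),y) \equiv 0$ near $\zeta$, and hence a local ring isomorphism
\[
\C\{x-\zeta\}/\langle f^A\rangle \;\cong\; \C\{y - y_\zeta\},\qquad x_i \mapsto \varphi_i(y)\ \ (i \le r).
\]
Under this identification, the chain rule together with $D\varphi(y_\zeta) = -A(\zeta)^{-1} B(\zeta)$ --- precisely the identity behind $\Sigma(\zeta) = 0$ --- shows that $f^B_i$ passes to $\bar f^B_i(y) := f^B_i(\varphi(y),y)$, while the Schur complement entries $\Sigma_{ij}(x)$ pass to the partial derivatives $\partial_{y_j} \bar f^B_i$. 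In other words, the image of $\Sigma$ in the quotient is exactly the Jacobian matrix $D\bar f^B$, and $\bar f^B$ vanishes to order at least $2$ at $y_\zeta$ (both values and first derivatives are killed).

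This recasts the multiplicities as
\[
\mu_f(\zeta) = \dim_\C \C\{y\}/\langle \bar f^B\rangle,\qquad \mu_{K(f)}(\zeta) = \dim_\C \C\{y\}/\langle D\bar f^B\rangle,
\]
so the theorem reduces to the intrinsic statement: for an analytic system $g = (g_1,\ldots,g_p)$ in $n' = n-r$ variables, vanishing to order $\ge 2$ at $0$ with $0$ as an isolated zero, one has $\mu(\langle g\rangle,0) > \mu(\langle Dg\rangle,0)$. I would prove this via Macaulay dual spaces, whose dimension equals the multiplicity. Set $d := \min_i \mathrm{ord}_0(g_i) \ge 2$, pick $i_0$ together with a multi-index $\alpha$ such that $|\alpha| = d$ and $\partial^\alpha g_{i_0}(0) \ne 0$, and choose $k$ with $\alpha_k \ge 1$. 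The differential functional $L := \partial^{\alpha - e_k}$, of order $d-1$, satisfies $L[y^\beta g_i](0) = 0$ for every $\beta$ and $i$: the Leibniz expansion only involves derivatives of $g_i$ of order $\le d-1$, which vanish at $0$ because $\mathrm{ord}_0(g_i) \ge d$. Hence $L \in D[\langle g\rangle,0]$. On the other hand $L[\partial_k g_{i_0}](0) = \partial^\alpha g_{i_0}(0) \ne 0$, so $L \notin D[\langle Dg\rangle,0]$. Combined with the inclusion $D[\langle Dg\rangle,0] \subseteq D[\langle g\rangle,0]$, this strict containment of finite-dimensional dual spaces delivers the desired strict drop in multiplicity.

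The main obstacle is precisely that last inclusion, dual to $\langle g\rangle \subseteq \langle Dg\rangle$ in $\C\{y\}$. Degree-by-degree Euler identities $d\,g_i^{(d)} = \sum_k y_k\,\partial_k g_i^{(d)}$ on the homogeneous components give only the approximate inclusion $g_i \in \langle Dg\rangle + \mathfrak{m}^N$ for every $N$, and Saito's theorem on quasi-homogeneous singularities warns that such an approximate inclusion cannot in general be promoted to a genuine one. The isolated-zero hypothesis on $g$ --- absent from the hypersurface setting where Saito's counterexamples live --- must therefore be used crucially, for instance by applying Nakayama's lemma to the finitely generated module $(\langle g\rangle + \langle Dg\rangle)/\langle Dg\rangle$, or by combining Krull's intersection theorem in the completion $\C[[y]]$ with faithful flatness to descend back to $\C\{y\}$.
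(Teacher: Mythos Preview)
Your reduction by the implicit function theorem is exactly the paper's: after the change of variables the paper lands on the same comparison between $g:=f_{r+1:s}\circ\Phi$ and its gradient system $\nabla g$, which it hands off to Lemma~\ref{drop_K}. Your Macaulay-dual witness $L=\partial^{\alpha-e_k}$ is the dual of the paper's standard-basis witness: the paper picks a generator of minimal valuation $p$, observes that some $\partial_j f_i$ has leading monomial of degree $p-1$ (hence outside $LT\langle f\rangle$, since every element of $\langle f\rangle$ has valuation $\ge p$), and declares ``the conclusion follows''. But that three-line argument silently uses the very inclusion you isolate: to turn ``there is a monomial in $LT\langle\nabla f\rangle\setminus LT\langle f\rangle$'' into ``$\langle\nabla f\rangle$ has strictly fewer standard monomials than $\langle f\rangle$'', one also needs $LT\langle f\rangle\subseteq LT\langle\nabla f\rangle$, equivalently $\langle f\rangle\subseteq\langle\nabla f\rangle$. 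So you have not diverged from the paper --- you have simply been more explicit about a point its proof of Lemma~\ref{drop_K} leaves unaddressed.

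One warning about your sketched remedy. The degree-by-degree Euler identity $d\,g_i^{(d)}=\sum_k y_k\,\partial_k g_i^{(d)}$ involves $\partial_k g_i^{(d)}=(\partial_k g_i)^{(d-1)}$, a homogeneous \emph{piece} of $\partial_k g_i$, which need not lie in $\langle Dg\rangle$; so the passage to $g_i\in\langle Dg\rangle+\mathfrak m^N$ for every $N$ is not immediate from those identities. The full Euler relation $\sum_k y_k\,\partial_k g_i=\sum_d d\,g_i^{(d)}$ does yield $g_i\in\langle Dg\rangle+\mathfrak m^{\mathrm{ord}(g_i)+1}$ in one step, but iterating on the remainder drags in second partials of $g_i$, again not manifestly in $\langle Dg\rangle$. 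Your instinct that the $\mathfrak m$-primariness of $\langle Dg\rangle$ (so that $\mathfrak m^M\subseteq\langle Dg\rangle$ for some $M$, which the paper does assume in Lemma~\ref{drop_K} via ``$\zeta$ isolated for $\nabla f$'') is the missing ingredient is surely right, but the bridge from the approximate inclusion to the exact one still has to be built carefully.
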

\begin{proof}
If $r=0$ then the system $K(f)$ consists of  all partial derivatives $$\dis \nabla f(x):=\left (\frac{\partial
  f_i(x)}{\partial x_j },\quad 1\le j\le n,\quad 1\le i\le s\right
).$$
 Then, the conclusion follows from the lemma~\ref{drop_K}.

If $r>0$ the system $K(f)$ consists of $f_1,\ldots,f_r$ augmented by
the elements of the schur complement
$D(x)~-~C(x)~A(x)^{-1}~B(x)$.
From the proposition~\ref{schur},  the relations between the lines are
  $$(C(x),D(x))-C(x)A(x)^{-1}(A(x),B(x))=0.$$
It is easy to see that  the system $K(F)=0$
 is equivalent to the following 
  \begin{equation}\label{KF_eq}
  \left (f_1,\ldots, f_r, \nabla f_{i}(x)-\sum_{j=1}^r\lambda_{ij}(x)\nabla f_j(x) =0,\quad i=r+1:s\right )=0,
  \end{equation}
  with $( \lambda_{ij}(x)):=\left (C(x)A(x)^{-1} \right )^T$.
\\ 
From the implicit function theorem, we know that there exists
a local isomorphism $\Phi$ such that 
$$x_{1:r}-\zeta_{1:r}=f_{1:r}\circ \Phi.$$
By substitution of $x_{1:r}-\zeta_{1:r}$ in $f=0$ we obtain the system 
\begin{equation}\label{KF_eq1}
(x_1-\zeta_1,\ldots, x_r-\zeta_r,\, f_{r+1:s}\circ\Phi )=0.
\end{equation}
We remark that the multiplicity of the root $\zeta$ has not changed. The ideal generated by $f_{r+1:s}\circ\Phi$ only contains the monomials $x_i-\zeta_i$, $i=r+1:n$.
On the another hand the multiplicity of $\zeta$ as root of system~(\ref{KF_eq1}) has not changed~: it is also  the multiplicity 
of $\zeta_{r+1:n}$ as root of system $f_{r+1:s}\circ\Phi$.
Moreover,  the multiplicity
of $\zeta$ as root of the system~(\ref{KF_eq}) is equal to the multiplicity of $\zeta_{r+1:n}$ as root of the system
$\nabla( f_{r+1:s}\circ\Phi)$.
We now apply the lemma~\ref{drop_K} to the system $f_{r+1:s}\circ\Phi$ to deduce that the multiplicity drops. We are done.
\end{proof} 
\begin{pp}\label{schur}
Let $\dis M=\left (\begin{array}{cc}
A&B \\C&D  
  \end{array}\right )\in \C^ {s\times n}$ of rank $r$ 
  where $A\in\C^{r\times r}$ is invertible. Then the relations between
  the lines (respectively the columns) of $M$ are given by
  $$ (C,D)-CA^{-1}(A,B)=0,\quad
   (\textrm{respectively $\dis  \left (\begin{array}{c}
B \\D  
  \end{array} \right )-\left (\begin{array}{c}
A \\C  
  \end{array} \right )A^{-1}B=0$} ).$$ 
\end{pp}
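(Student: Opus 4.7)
The plan is to exploit the rank hypothesis together with invertibility of $A$ to show that every row of the bottom block $(C,D)$ is forced to be a specific linear combination of the rows of the top block $(A,B)$, and then to read off the column statement by transposing the argument.

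First, I would observe that the top block $(A,B) \in \C^{r\times n}$ already has $r$ linearly independent rows, because its leading $r\times r$ submatrix $A$ is invertible. Since $\rank(M) = r$, the row space of $M$ must coincide with the row space of $(A,B)$. Consequently there exists a unique matrix $\Lambda \in \C^{(s-r)\times r}$ such that
\[
(C,D) = \Lambda\,(A,B).
\]

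Second, I would restrict this identity to the first $r$ columns: this gives $C = \Lambda A$, and invertibility of $A$ forces $\Lambda = CA^{-1}$. Substituting back and reading the last $n-r$ columns yields $D = CA^{-1} B$, which is precisely the row relation $(C,D) - CA^{-1}(A,B) = 0$ announced in the statement.

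For the column version, I would apply the same argument to $M^T$, or equivalently use that the column space of $M$ equals the column space of $\begin{pmatrix}A\\C\end{pmatrix}$ (again because this block already contains $r$ independent columns and $\rank(M)=r$). Writing $\begin{pmatrix}B\\D\end{pmatrix} = \begin{pmatrix}A\\C\end{pmatrix} \Xi$ for some $\Xi \in \C^{r\times(n-r)}$, the top block reads $A\Xi = B$, whence $\Xi = A^{-1}B$ and the bottom block produces $D = CA^{-1}B$, the column relation sought. There is no real obstacle here; the only thing worth being careful about is the initial step of asserting that $(A,B)$ already exhausts the row space of $M$, which uses both the rank equality and the fact that $A$ alone has rank $r$.
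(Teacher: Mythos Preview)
Your argument is correct and is essentially the same as the paper's: both reduce the row and column relations to the single condition $D-CA^{-1}B=0$, i.e.\ vanishing of the Schur complement. The paper simply asserts this reduction and then invokes as ``classical'' the equivalence between $\rank(M)=r$ (with $A$ invertible of size $r$) and $\schur(M)=0$, whereas you supply that equivalence explicitly via the row-space argument and the identification $\Lambda=CA^{-1}$.
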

\begin{proof} The proposition follows from the equivalence: 
\\ $(C,D)-CA^{-1}(A,B)=0$ and $\dis \left (\begin{array}{c} B \\D
  \end{array} \right )-\left (\begin{array}{c}
A \\C  
  \end{array} \right )A^{-1}B=0$
  iff $D-CA^{-1}B=~0$.  Since the rank of matrix $M$ is equal to $r$,
  this is classically equivalent to $\schur (M)~=~0$.
\end{proof}
\begin{defi}
The valuation of an analytic system $f=(f_1,\ldots,f_s)$ at $\zeta$ is the 
minimum of the valuation of $f_i$'s at $\zeta$.
\end{defi}
\begin{req}
A generator of $I\C\{x-\zeta\}$ of minimal valuation among others
generators can always be taken
as one of the generator of a (minimal) standard basis.
\end{req}

This is a consequence of a fundamental property of local orderings: the
valuation of a sum is larger than the valuation of any of the
summands.\\

In the case where the construction of a standard basis of
$I\C\{x-\zeta\}$ starts from a given set of \emph{polynomial} generators,
the goal can be achieved e.g. through the original Mora's tangent cone
algorithm, by successive $S$-polynomials (and reductions which are
particular cases of them). The valuation can only increase through
these operations, which forbids to reduce $S(f,g)$ by $f$ (or $g$ by
the way).
\begin{lm}\label{drop_K} 
  Let $\dis \nabla f(x):=\left (\frac{\partial f_i(x)}{\partial x_j
  },\quad 1\le j\le n,\quad 1\le i\le s\right )$.  Let us suppose that
  $\zeta$ is an isolated root of $f$ and $\nabla f$.  Then the
  multiplicity of $\zeta$ as root of $\nabla f$ is strictly lower than the
  multiplicity of $\zeta$ as root of $f=0.$
\end{lm}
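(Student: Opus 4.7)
Translate so that $\zeta = 0$. Since $\zeta$ is an isolated zero of $\nabla f$, we have $\nabla f(0) = 0$, i.e.\ $Df(0)=0$, so each $f_i$ has valuation at least $2$ at the origin. Put $I := \langle f_1,\ldots,f_s\rangle \C\{x\}$ and $J := \langle \nabla f\rangle\C\{x\}$; then $I \subseteq \mathfrak{m}^2$ while $J \subseteq \mathfrak{m}$. I use the standard-monomial count of Theorem~\ref{coxp178}: fixing a local admissible order compatible with the valuation, the multiplicities $\mu(\zeta;f)$ and $\mu(\zeta;\nabla f)$ are the numbers of monomials outside $LT(I)$ and $LT(J)$ respectively. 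It therefore suffices to exhibit a monomial in $LT(J)\setminus LT(I)$, since this will force $LT(J)\supsetneq LT(I)$ and hence the strict drop.

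By the remark on generators of minimal valuation, I would pick an element $g$ of a minimal standard basis of $I$ with $v := \mathrm{val}(g)$ equal to the minimum valuation in $I$; automatically $v \geq 2$. Its leading form $g_v$ is a nonzero homogeneous polynomial of degree $v \geq 2$, so some variable $x_k$ satisfies $\partial g_v/\partial x_k \neq 0$, whence $\partial g/\partial x_k$ has valuation exactly $v-1$. Writing $g = \sum_i a_i f_i$ with $a_i \in \C\{x\}$ and applying Leibniz,
$$\partial g/\partial x_k = \sum_i (\partial a_i/\partial x_k)\, f_i + \sum_i a_i\, \partial f_i/\partial x_k,$$
where the first sum lies in $I \subseteq \mathfrak{m}^2$ and the second (call it $w$) lies in $J$.

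When $v=2$, the first sum has valuation $\geq 2 > 1 = v-1$, so $w$ has valuation exactly $1$ and its linear leading form is a strict divisor of $LT(g)$; since $LT(I)$ consists only of monomials of degree $\geq 2$, this linear monomial lies in $LT(J)\setminus LT(I)$, and we are done. The main obstacle is the case $v \geq 3$: the first sum may now also attain valuation $v-1$, and its leading form can cancel $\partial g_v/\partial x_k$ inside $w$, so the actual valuation of $w \in J$ may strictly exceed $v-1$. My plan to overcome this is a descending induction on the minimum valuation in the augmented ideal $I + J$ — which \emph{does} drop, because $\partial g/\partial x_k \in I+J$ has valuation $v-1$ — together with the fact that $J$ has finite colength, so $\mathfrak{m}^N \subseteq J$ for some $N$ by the analytic Nullstellensatz. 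Iterating the Leibniz decomposition (or applying further derivatives) must eventually produce an element of $J$ whose leading monomial sits strictly outside $LT(I)$. Making this iteration output an element of $J$ itself, rather than merely of $I+J$, is the technical heart of the argument.
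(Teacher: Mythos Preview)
Your strategy---produce a monomial in $LT(J)\setminus LT(I)$---is exactly the paper's, but you manufacture a difficulty that is not there. You observe that the first Leibniz sum $\sum_i(\partial a_i/\partial x_k)\,f_i$ lies in $I$, yet you then only invoke $I\subseteq\mathfrak{m}^2$. In fact $I\subseteq\mathfrak{m}^v$: every element $\sum a_i f_i$ of $I$ has valuation at least $\min_i\mathrm{val}(f_i)=v$. So the first sum has valuation $\ge v>v-1$ and cannot interfere with the degree-$(v-1)$ initial form of $\partial g/\partial x_k$; consequently $w\in J$ always has valuation exactly $v-1$, and its leading monomial (of degree $v-1$) automatically lies outside $LT(I)\subseteq\mathfrak{m}^v$. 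The argument is uniform in $v\ge 2$, and the descending-induction machinery you sketch is unnecessary.

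The paper streamlines further by dispensing with Leibniz altogether: by the Remark immediately preceding the lemma, a generator $f_i$ of minimal valuation may itself be taken as a member of a minimal standard basis, so one differentiates $f_i$ directly; then $\partial f_i/\partial x_j\in J$ with leading term a strict divisor of the minimal generator $LT(f_i)$ of $LT(I)$, hence outside $LT(I)$. One genuine gap does remain in your outline (and, to be fair, the paper's terse ``the conclusion follows'' does not address it either): exhibiting a monomial in $LT(J)\setminus LT(I)$ does \emph{not} by itself force $LT(J)\supsetneq LT(I)$; for that you would also need $LT(I)\subseteq LT(J)$, which you have not argued.
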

\begin{proof}
Let us take one of the $f_k$'s, say $f_i$, of minimal valuation
at $\zeta$.  This
valuation is greater than $2$.  There exists an index $j$ such that
the leading term $\dis \frac{\partial f_i(x)}{\partial x_j }$ is not
in the ideal generated by $f$. The conclusion follows.
\end{proof}
\begin{lm}\label{drop_dfl}
Let $p$ the valuation of $f$ at $\zeta$. Let us consider the following system 
$$\dis
D^{p-1}f(x)~:~=~\left (\frac{\partial^{|\alpha|}f_i(x)}{\partial x^\alpha
},\, |\alpha|= p-1,\, 1\le i\le s\right ).$$  Let us assume that $p\ge
2$ and that the rank of $D^pf(\zeta)$ is equal to $r$.
 Then the multiplicity of $\zeta$ as root of $ D^{p-1}f(x)=0$ is strictly
 lower than the multiplicity of $\zeta$ as root of $f=0.$ More precisely
 the multiplicity of the root $\zeta$ drops by at least $p^r$.
\end{lm}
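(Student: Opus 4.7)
The plan is to push the argument of Lemma~\ref{drop_K} one level deeper, using the hypothesis on $\rank D^pf(\zeta)$ to quantify the drop. After translation I assume $\zeta=0$ throughout.

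First I will control the leading-term ideals on each side. Because every $f_i$ has valuation at least $p$ at $0$, the same holds for every element of $I(f)$, so $\mathrm{LT}(I(f))\subset\mathfrak{m}^p$ and every monomial of total degree at most $p-1$ is a standard monomial for $f$. On the other side, each entry of $D^{p-1}f$ has valuation at least $1$, and its linear part is exactly the corresponding entry of $D^{p}f(0)$. The hypothesis $\rank D^{p}f(\zeta)=r$ is precisely that these linear parts span an $r$-dimensional subspace of $(\C^n)^\ast$. A linear change of coordinates (which preserves multiplicities) lets me assume the $r$ independent linear forms are $x_1,\dots,x_r$, so that $x_1,\dots,x_r\in \mathrm{LT}(I(D^{p-1}f))$.

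The strict drop is now immediate: $x_1$ is a standard monomial for $f$ but is no longer standard for $D^{p-1}f$, so $\mu(0;D^{p-1}f)<\mu(0;f)$, exactly paralleling Lemma~\ref{drop_K}.

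To obtain the quantitative bound $p^r$, I would exhibit $p^r$ standard monomials of $f$ that are lost under kerneling. The natural candidates are the monomials $x_1^{a_1}\cdots x_r^{a_r}$ with $0\le a_i\le p-1$: each is divisible by some $x_j$ with $j\le r$, hence lies in $\mathrm{LT}(I(D^{p-1}f))$ and is not standard for $D^{p-1}f$. What remains is to show that all $p^r$ of these monomials are standard for $f$. This cannot follow from $\mathrm{LT}(I(f))\subset\mathfrak{m}^p$ alone, since $x_1^{p-1}\cdots x_r^{p-1}$ has degree $r(p-1)$, which can exceed $p-1$. The plan is to proceed by induction on $r$, using the implicit-function / substitution device from the proof of Theorem~\ref{drop_schur} to peel off one coordinate direction at a time, each step lowering $r$ by one while preserving the valuation $p$ of the reduced system. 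Verifying that this reduction actually leaves the full $p^r$-block of $f$-standard monomials untouched is the step I expect to be the main obstacle.
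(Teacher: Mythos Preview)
Your approach mirrors the paper's almost exactly: both arguments note that $\mathrm{LT}(I(f))\subset\mathfrak m^p$, both make a linear change of coordinates so that $x_1,\dots,x_r$ lie in $\mathrm{LT}(I(D^{p-1}f))$, and both read off the strict drop from the fact that $x_1$ is standard for $f$ but not for $D^{p-1}f$. On that part you are correct and aligned with the paper.

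For the quantitative bound $p^r$, the paper is in fact far terser than you. It simply asserts that the number of standard monomials for $f$ is bounded below by $p^n$ (on the sole ground that all monomials of $\mathrm{LT}(I(f))$ have degree $\ge p$) and then concludes in one sentence that adjoining $x_1,\dots,x_r$ to the leading-term ideal removes at least $p^r$ of them. It does not carry out the induction on $r$ that you sketch, and it does not address the difficulty you raise---that the monomials $x_1^{a_1}\cdots x_r^{a_r}$ with $0\le a_i\le p-1$ may have total degree exceeding $p-1$, so their standardness for $f$ does not follow from $\mathrm{LT}(I(f))\subset\mathfrak m^p$ alone. In short, the paper's own proof contains precisely the gap you flag; your proposal is more scrupulous about it, but does not close it either, as you yourself acknowledge.

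One small slip in your counting: among the $p^r$ monomials $x_1^{a_1}\cdots x_r^{a_r}$ with $0\le a_i\le p-1$, the constant $1$ (all $a_i=0$) is not divisible by any $x_j$ and so remains standard for $D^{p-1}f$. The set you describe therefore loses at most $p^r-1$ monomials, not $p^r$.
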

\begin{proof}
Since the valuation $p\ge 2$ then $\dis f(x)=\sum_{k\ge
  p}\frac{1}{k!}D^kf(\zeta)(x-\zeta)^k$ with $D^pf(\zeta)\ne 0$.  The monomials of
$LT(f)$ are of type $(x-\zeta)^\alpha$ with $|\alpha|\ge p\ge 2$. Hence
the number of standard monomials of $\dis \C\{x-\zeta\}/LT(f)$ is bounded
below by $p^n.$ Since the rank of the derivative of $D^{p-1}f(x)$ at
$\zeta$ is $r>0$, we can suppose without loss in generality that
$x_1-\zeta_1,\ldots, x_r-\zeta_r$ are in the ideal $LT(D^{p-1}f(x)
)$. Consequently the number of standard monomials dropped by at least
$p^r$.
\end{proof}
\section{ Quantitative version of Rouch\'es theorem in the regular case}
In this section we consider as previously $\omega\in \C^n$ and the set
$\A2(\omega, R_{\omega})$. For $x\in B(\omega,R_\omega)$
we introduce the quantities
\begin{eqnarray}
&\dis \beta(f,x)=\|Df(x)^{-1}f(x)\|\label{eq_betafx}\ncla{$\beta(f,x)$}
\\\nonumber \\
&\dis\kappa_{x}=\max\left (\, 1,
\,\frac{R_{\omega}(n+1)}{R_{\omega}^2-\rho_{x}^2} \, \right )\label{eq_kappafx}
\ncla{$\kappa_{x_0}$}
\\\nonumber \\
&\dis
\gamma(f,x)=\max \left (1, 
\,\frac{\|f\|\, \|Df(x)^{-1}\|\,R_{\omega}\,\kappa_{x}}{(R_{\omega}^2-\rho_{x}^2)^\frac{n+1}{2}} \,
 \right ) \label{eq_gammafx}\ncla{$\gamma(f,x)$}
\\\nonumber \\
& \dis \alpha(f,x)=\beta(f,x)\,\kappa_{x}\label{eq_alphafx}
\ncla{$\alpha(f,x)$}
\end{eqnarray}

\begin{tm}\label{rouche_regular}($\alpha$-Theorem).
Let $R_{\omega}>0$, $x_0\in B(\omega, R_{\omega})$, 
and $f=(f_1,\ldots,f_n) \in
(\A2(\omega, R_{\omega}))^n$.
  Let us note  $\alpha$, $\beta$, $\gamma$, $\kappa$ for
  $\alpha(f,x_0)$, $\beta (f,x_0)$, $\gamma(f,x_0)$, $\kappa_{x_0}$ 
respectively defined in ~(\ref{eq_alphafx}), ~(\ref{eq_betafx}), ~(\ref{eq_gammafx})
and ~(\ref{eq_kappafx}).
\\
Let us  suppose  that
$$\alpha< 2\gamma+1-\sqrt{(2\gamma+1)^2-1}.$$
Then for all $\theta>0$ such that $B(x_0,\theta)\subset B(\omega,R_{\omega})$ and
$$
\frac{ \alpha+1 -\sqrt{(\alpha+1)^2-4\alpha(\gamma+1)} }{2( \gamma +1)}< 
u:=\kappa\theta < \frac{ 1}{\gamma+1 }$$
    $f$ has only one root in the ball $B(x_0,\theta)$. 
  \end{tm}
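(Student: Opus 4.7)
The plan is to apply a multivariate Rouch\'e theorem on the sphere $\partial B(x_0,\theta)$, comparing $f$ with its affine Newton model
\[
p(x):=Df(x_0)(x-N_0),\qquad N_0:=x_0-Df(x_0)^{-1}f(x_0).
\]
Since $p$ is invertible-linear in $x-N_0$, it vanishes only at $N_0$, with multiplicity one; the preliminary task is to verify $N_0\in B(x_0,\theta)$, i.e.\ $\beta<\theta$, or equivalently $\alpha<u$. This will follow from the stated range for $u$ via the quadratic analysis below.

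The heart of the proof is the Rouch\'e inequality $\|f(x)-p(x)\|<\|p(x)\|$ on $\|x-x_0\|=\theta$. Writing $f(x)-p(x)=\sum_{k\ge 2}\frac{1}{k!}D^kf(x_0)(x-x_0)^k$, Proposition~\ref{Berg_F_DFK} gives
\[
\|f(x)-p(x)\|\le \|f\|\sum_{k\ge 2}\binom{n+k}{k}\frac{R_\omega^{1+k}\theta^k}{(R_\omega^2-\rho_{x_0}^2)^{\frac{n+1}{2}+k}}.
\]
Using the crude estimate $\binom{n+k}{k}\le(n+1)^k$ together with the key fact $\kappa_{x_0}\ge(n+1)R_\omega/(R_\omega^2-\rho_{x_0}^2)$, which forces $(n+1)R_\omega\theta/(R_\omega^2-\rho_{x_0}^2)\le u$, the sum collapses to a geometric series bounded by $u^2/(1-u)$. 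After multiplying by $\|Df(x_0)^{-1}\|$ and invoking the definition of $\gamma$, I obtain $\|Df(x_0)^{-1}\|\cdot\|f(x)-p(x)\|\le \gamma u^2/(\kappa(1-u))$. Combined with the elementary lower bound $\|p(x)\|\ge(\theta-\beta)/\|Df(x_0)^{-1}\|$, multiplying through by $\kappa\|Df(x_0)^{-1}\|$ and using $\alpha=\kappa\beta$ reduces the Rouch\'e inequality to
\[
(\gamma+1)u^2-(1+\alpha)u+\alpha<0.
\]

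To finish I would analyse this quadratic in $u$. Its discriminant $(1+\alpha)^2-4\alpha(\gamma+1)$ is positive exactly when $\alpha<2\gamma+1-\sqrt{(2\gamma+1)^2-1}$, which is the stated hypothesis; the two real positive roots are $u_\pm=\bigl((1+\alpha)\pm\sqrt{(1+\alpha)^2-4\alpha(\gamma+1)}\bigr)/(2(\gamma+1))$, with $u_-$ matching the lower bound of the theorem. Evaluating the quadratic at $u=1/(\gamma+1)$ yields $\alpha\gamma/(\gamma+1)\ge 0$, placing $1/(\gamma+1)$ outside $(u_-,u_+)$; since the midpoint of the roots lies below $1/(\gamma+1)$ whenever $\alpha<1$ (a consequence of the hypothesis), one actually has $u_+\le 1/(\gamma+1)$. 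Hence any $u$ in the prescribed range lies in the open interval of strict negativity of the quadratic; in particular $\alpha<u_-\le u$, so $N_0\in B(x_0,\theta)$. Rouch\'e then applies uniformly on $\partial B(x_0,\theta)$ and transfers the zero count of $p$ to $f$: exactly one root in $B(x_0,\theta)$. The main technical step is compressing the Bergman derivative series $\sum_{k\ge 2}\binom{n+k}{k}(\cdot)^k$ into a geometric form of ratio $\le u$; this is precisely where the factor $n+1$ built into $\kappa_{x_0}$ is absorbed, producing the clean quadratic whose negativity interval matches the stated bounds.
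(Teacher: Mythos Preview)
Your Rouch\'e decomposition differs from the paper's. You compare $f$ with its affine Taylor model $p(x)=Df(x_0)(x-N_0)$, whose unique zero $N_0$ is automatically in the ball once $\beta<\theta$. The paper instead writes $Df(x_0)^{-1}f(x)=Df(x_0)^{-1}f(x_0)+g(x)$ with $g(x)=(x-x_0)+\sum_{k\ge 2}\frac{1}{k!}Df(x_0)^{-1}D^kf(x_0)(x-x_0)^k$, so the comparison function keeps the higher-order tail and the Rouch\'e difference is the \emph{constant} $Df(x_0)^{-1}f(x_0)$. In the paper's version the comparison zero is $x_0$ itself (trivially in the ball), but one must argue separately that $g$ has no other zero in $B(x_0,\theta)$; this is exactly where the upper bound $u<1/(\gamma+1)$ enters. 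Your affine $p$ avoids that step entirely. Both routes collapse to the same quadratic $(\gamma+1)u^2-(1+\alpha)u+\alpha<0$, and your verification that $\alpha<u_-$ (hence $N_0\in B(x_0,\theta)$) is correct.

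There is, however, a logical slip in your last paragraph. You correctly compute $q(1/(\gamma+1))=\alpha\gamma/(\gamma+1)\ge 0$ and deduce $u_+\le 1/(\gamma+1)$, but then conclude that ``any $u$ in the prescribed range lies in the open interval of strict negativity''. The containment goes the other way: $(u_-,u_+)\subset (u_-,1/(\gamma+1))$, not the reverse. For $u\in[u_+,\,1/(\gamma+1))$ the quadratic is nonnegative and your Rouch\'e inequality $\|f-p\|<\|p\|$ is not established. (The paper's proof contains exactly the same gap: it asserts the quadratic is negative on the full stated range, which is false once $\alpha>0$.) As written, your argument proves the conclusion only on the narrower interval $u\in(u_-,u_+)$; since in your decomposition the bound $1/(\gamma+1)$ plays no structural role, this is in fact the natural output of your method.
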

Before proving this theorem we need the following proposition.
\begin{pp}\label{Berg_F_DFk_w}
For all $f\in \A2(\zeta,R_{\omega})^s$ we have
   $$\dis\forall k\ge 0,\quad  \frac{1}{k!}\|D^k{f}(x_0)\|\le
||f||\frac{(n+1)^k R_{\omega}^{1+k}}{\left (R_{\omega}^2-\rho_{x_0}^2\right )^{\frac{n+1}{2}+k}}.$$
\end{pp}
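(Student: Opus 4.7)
My plan is to read this proposition as a direct corollary of Proposition~\ref{Berg_F_DFK}, which already provides the bound
$$\|D^kf(x_0)\|\le \|f\|\,\frac{(n+1)(n+2)\cdots(n+k)\,R_\omega^{1+k}}{(R_\omega^2-\rho_{x_0}^2)^{\frac{n+1}{2}+k}}.$$
Dividing both sides by $k!$ transforms the product $(n+1)(n+2)\cdots(n+k)$ into the binomial coefficient $\binom{n+k}{k}$, so the only thing left to establish is the purely combinatorial inequality $\binom{n+k}{k}\le (n+1)^k$ for every $k\ge 0$ (with the $k=0$ case being the trivial equality $1=1$).

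For this inequality I would proceed by induction on $k$. The base $k=0$ is immediate. For the inductive step, using Pascal's recursion
$$\binom{n+k+1}{k+1}=\frac{n+k+1}{k+1}\binom{n+k}{k},$$
the induction hypothesis reduces the target inequality to $\frac{n+k+1}{k+1}\le n+1$, equivalent to $n+k+1\le (n+1)(k+1)=n+k+1+nk$, i.e.\ to $nk\ge 0$, which is obvious. Combining with Proposition~\ref{Berg_F_DFK} closes the proof.

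I expect no genuine obstacle here: the statement is essentially a cosmetic rewriting of the earlier bound with the factorial absorbed into the numerical constant, and the combinatorial inequality is elementary. The only care needed is to observe that the proposition is written for $(\A2(\omega,R_\omega))^s$-valued $f$ while Proposition~\ref{Berg_F_DFK} is stated in that same setting, so no extra estimate on coordinates is required.
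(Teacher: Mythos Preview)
Your proposal is correct and follows exactly the paper's own approach: the paper's proof consists of the single sentence ``It is enough to use the inequality $\frac{(n+1)\ldots(n+k)}{k!}\le (n+1)^k$ in the proposition~\ref{Berg_F_DFK}.'' You supply more detail than the paper by actually proving this combinatorial inequality via induction, but the strategy is identical.
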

\begin{proof}
It is enough to use the inequality
$$ \frac{(n+1)\ldots (n+k)}{k!}\le (n+1)^k$$
  in the proposition~\ref{Berg_F_DFK}.
\end{proof}
We are now ready to begin the proof of the theorem.
\begin{proof}
We let $Df(x_0)^{-1}f(x)=Df(x_0)^{-1}f(x_0)+g(x)$
with
 $$\dis g(x)=x-x_0+\sum_{k\ge 2}\frac{1}{k!}
Df(x_0)^{-1}D^kf(x_0)(x-x_0)^k.$$
We first remark that for all $x\in\C^n$ such that $\dis \|x-x_0\|=\theta$ we have
  \begin{eqnarray}\label{rouche_gx}
  \|g(x)\| &\ge &
  \|x-x_0 \|-\sum_{k\ge 2}\frac{1}{k!}
\|Df(x_0)^{-1} D^kf(x_0)\|\, \|x-x_0 \|^k\nonumber
\\
&
\ge &  \theta-\frac{\|f|\| \,\|Df(x_0)^{-1} \,  R_{\omega}}{(R_{\omega}^2-\rho_{x_0}^2)^\frac{n+1}{2}} 
\sum_{k\ge 2} \left (\frac{(n+1)R_{\omega}\,\theta}  
{R_{\omega}^2 -\rho_{x_0}^2}
  \right )^k
\quad \textsf{from proposition~\ref{Berg_F_DFk_w}}\nonumber
\\
&
\ge
&
\frac{u}{\kappa} - \frac{\gamma}{\kappa}\sum_{k\ge 2} u^k\nonumber\\
  &
  \ge
  &
\frac{1}{\kappa}\left (u-\gamma \frac{u^2}{1-u}\right ).
   \end{eqnarray}
The Rouché's theorem states that the analytic functions 
$Df(x_0)^{-1}f(x)$
and $g(x)$
have the same number of roots, each one  counting with the respective 
multiplicity, in the ball $B(x_0,\theta)$ if the inequality
$$\|Df(x_0)^{-1}f(x)-g(x)\|< \|g(x)\|$$
holds for all $x\in \partial B(x_0,\theta)$.
Let us first prove that $x_0$ is the only root of $g(x)$
in the ball $\dis B\left (x_0,\frac{1}{ \kappa( \gamma +1)}\right )$. In 
fact let $y\ne x_0$ be a root of $g(x)$ in
the ball $B(\omega,R_{\omega})$.
Let $v=\kappa \|y-x_0\|$. If $v\ge 1$ then $\dis \|y-x_0\|\ge 
1/\kappa>\frac{1}{\kappa(\gamma +1)}$. From the assumption we know that  $\dis \frac{1}{\kappa(\gamma +1)}\ge \theta$.
    In this case we conclude that  $y\notin B(x_0,\theta)$.
Otherwise $v<1$.
    We deduce from the inequality~(\ref{rouche_gx}) that
   $$\|g(y)\|=0\ge \frac{1}{\kappa} \left (v -\frac{\gamma  v^2}{1- v}\right 
). $$
   Hence $\dis\frac{1}{\gamma +1}\le v$. 
From the assumption on $\theta$, we then deduce that the distance between two distinct roots is bounded 
 from below by 
 $$\dis \|y-x_0\|\ge\frac{1}{\kappa (\gamma+1 )}>\theta. $$
   We then have proved that $x_0$ is the only one root of $g(x)$ in the 
ball  $\dis  B\left (x_0,  \frac{1 }{    \kappa (\gamma+1 )} \right)$.
   \\
Now, let $x\in B(\omega,R_{\omega})$ be such that $\dis \|x-x_0 
\|=\theta=\frac{u}{\kappa}$.
Then $B(x_0,\theta)\subset B(\omega, 
R_{\omega})$.  Always from the inequality~(\ref{rouche_gx}) we deduce that   
the inequality
\begin{equation}\label{eq_rouche}
\beta:=\|Df(x_0)^{-1}f(x_0)\|< \frac{ 1 }{\kappa } \left (u 
-\frac{\gamma u^2}{1- u}\right )
\end{equation}
implies $\|Df(x_0)^{-1}f(x)-g(x)\|< \|g(x)\|$ on the
boundary of the ball $B(x_0,\theta)$.
Since $\alpha=\beta\kappa$, this is satisfied if the numerator
$$(\gamma+1)u^2-(\alpha+1)u+\alpha$$
of the previous expression~(\ref{eq_rouche}) is strictly negative. Then it is easy to see 
that under the condition
$$\alpha:=\beta\kappa< 2\gamma+1-\sqrt{(2\gamma+1)^2-1}$$
the trinomial $(\gamma+1)u^2-(\alpha+1)u+\alpha$
has two roots equal to
\\ $\dis \frac{ \alpha+1 \pm\sqrt{(\alpha+1)^2-4\alpha(\gamma+1)} }{2( 
\gamma+1)}$. 
Hence
for all $\theta$ such that
$$ \frac{ \alpha+1 -\sqrt{(\alpha+1)^2-4\alpha(\gamma+1)} }{2( 
\gamma+1)}<  u:=\kappa\theta<\frac{ 1}{\gamma+1 }$$
we have $(\gamma+1)u^2-(\alpha+1)u+\alpha<0$.  Then
the inequality ~(\ref{eq_rouche}) is satisfied  and the system
$f$ has only one root in the ball $ \dis B(x_0,\theta)$. The theorem 
follows.
\end{proof}
\section{A new $\gamma$-theorem}\label{NS_gamma_sec}
Let $f=(f_1,\ldots , f_n)$ be an analytic  system which is regular at a 
root $\zeta$ .
The radius of the ball in which the Newton sequence  converges 
quadratically towards a regular root $\zeta$ is controlled by
  the following quantity
  $$\gamma(f,\zeta)=\sup_{k\ge 2}\left (
\frac{1}{k!}\|Df(\zeta)^{-1}D^kf(\zeta)\| \right )^{\frac{1}{k-1}}.$$
    More precisely we have the following
  result named $\gamma$-theorem.
  \begin{tm} ($\gamma$-Theorem of~\cite{BCSS98}). Let $f(x)$ an analytic system and $\zeta$ a
    regular root of $f(x)$. Let $\dis
    R_{\omega}=\frac{3-\sqrt{7}}{2\gamma(f,\zeta)}$. Then for all $x_0\in 
B(\zeta,R_{\omega})$
    the Newton sequence
    $$x_{k+1}=x_k-Df(x_k)^{-1}f(x_k),\quad k\ge 0,$$
     converges quadratically
    towards $\zeta$.
  \end{tm}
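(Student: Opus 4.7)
The plan is to run the classical Smale $\gamma$-argument. Introduce the scale-invariant quantity $u = \gamma(f,\zeta)\|x-\zeta\|$ and the auxiliary polynomial $\psi(u) = 1 - 4u + 2u^2$, whose smallest positive root is $1 - \sqrt{2}/2$. The hypothesis $\|x_0 - \zeta\| < (3-\sqrt{7})/(2\gamma(f,\zeta))$ translates to $u_0 < (3-\sqrt{7})/2$, and the proof amounts to showing that this inequality is exactly what makes Newton's map a contraction with quadratic refinement.

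First I would establish invertibility of $Df(x)$ whenever $u < 1 - \sqrt{2}/2$, together with the bound
\begin{equation*}
\|Df(x)^{-1}Df(\zeta)\| \le \frac{(1-u)^2}{\psi(u)}.
\end{equation*}
This is the Banach perturbation lemma applied to the Taylor expansion $Df(\zeta)^{-1}Df(x) = I + \sum_{k\ge 1}\frac{1}{k!}Df(\zeta)^{-1}D^{k+1}f(\zeta)(x-\zeta)^{k}$, each term of which is bounded by $(k+1)u^{k}$ using the definition of $\gamma(f,\zeta)$. Summing the series gives $\|I - Df(\zeta)^{-1}Df(x)\| \le 1/(1-u)^2 - 1$, and the announced bound follows provided $\psi(u) > 0$.

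Next I would expand both $f(x)$ and $Df(x)(x-\zeta)$ in Taylor series around $\zeta$ (using $f(\zeta)=0$) and subtract to obtain the key telescoping identity
\begin{equation*}
Df(x)(x-\zeta) - f(x) = \sum_{k\ge 2}(k-1)\,\frac{1}{k!}\,D^{k}f(\zeta)(x-\zeta)^{k}.
\end{equation*}
Inverting $Df(x)$ and inserting the bound from the previous step yields
\begin{equation*}
\|N_{f}(x) - \zeta\| \le \frac{(1-u)^2}{\psi(u)}\sum_{k\ge 2}(k-1)\gamma^{k-1}\|x-\zeta\|^{k} = \frac{u}{\psi(u)}\,\|x-\zeta\| = \frac{\gamma(f,\zeta)}{\psi(u)}\,\|x-\zeta\|^2,
\end{equation*}
after evaluating $\sum_{j\ge 1} ju^{j} = u/(1-u)^{2}$.

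Finally I would close the iteration. The factor $u/\psi(u)$ is $\le 1/2$ precisely when $2u^{2} - 6u + 1 \ge 0$, whose smaller positive root is $(3-\sqrt{7})/2$; this is the arithmetic reason for the constant in $R_\omega$. Hence under the hypothesis $u_0 < (3-\sqrt{7})/2$ we get $\|x_1-\zeta\| \le \tfrac{1}{2}\|x_0-\zeta\|$, so $u_1 \le u_0/2$ remains in the good range. By induction $u_k \le u_0/2^{k}$, each Newton step stays inside $B(\zeta, R_\omega)$, and the per-step estimate $\|x_{k+1}-\zeta\| \le (\gamma(f,\zeta)/\psi(u_0))\|x_k-\zeta\|^{2}$ gives the advertised quadratic convergence. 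The only delicate point is the bookkeeping with the two polynomial inequalities (positivity of $\psi$, and $u/\psi(u) \le 1/2$); all the rest is majorization of convergent geometric-type series using the single quantity $\gamma(f,\zeta)$.
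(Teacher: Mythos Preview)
Your argument is correct and is exactly the classical Smale/BCSS proof: the three steps (Neumann-series bound $\|Df(x)^{-1}Df(\zeta)\|\le(1-u)^2/\psi(u)$, the telescoping identity for $Df(x)(x-\zeta)-f(x)$, and the contraction $u/\psi(u)\le 1/2$ iff $u\le(3-\sqrt 7)/2$) are precisely the ones in \cite{BCSS98}, page~158. The paper does not supply its own proof of this statement---it is simply quoted as background---but it reproduces the identical scheme in Proposition~\ref{new_gamma_pp} (items 1--3) when proving its Bergman-kernel variant, Theorem~\ref{new_gamma_tm}, so your write-up matches both the cited source and the paper's own use of the method.
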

  Taking in account the Bergman kernel to reproduce the analytic 
functions  we are going to prove a new version of $\gamma$-theorem for 
analytical regular systems.
\begin{tm}\label{new_gamma_tm}($\gamma$-Theorem).
Let $\zeta$ a regular root of an analytic system $f=(f_1,\ldots,f_n)\in \A2( 
\omega ,R_{\omega})^n$.
Let us note $\gamma$ and $\kappa$ for $\gamma(f,\zeta)$ and  $\kappa_\zeta$
respectively defined in  ~(\ref{eq_gammafx}), ~(\ref{eq_kappafx}).  
Then, for all $x$ be such that
$$   \dis u:=\kappa \,\|x-\zeta\|
< \frac{2\gamma+1-\sqrt{ 4\gamma^2+3\gamma}}{\gamma+1}$$ the 
Newton sequence
$$x_0=x,\quad x_{k+1}=N_f(x_k),\quad k\ge 0,$$
   converges quadratically towards $\zeta$. More precisely
$$\|x_k-\zeta|\le \left ( \frac{1}{2}\right )^{2^k-1}\,\|x-\zeta\|,\quad k\ge 0.$$
\end{tm}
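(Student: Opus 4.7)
My plan is to follow the classical Smale--Dedieu template for a $\gamma$-theorem, with the Smale constant $\gamma(f,\zeta)^{k-1}$ replaced everywhere by the hybrid quantity $\gamma\,\kappa^{k-1}$ that arises naturally from the Bergman-kernel bounds. The first step is to translate Proposition~\ref{Berg_F_DFk_w} into the Smale-type inequality
$$\frac{1}{k!}\|Df(\zeta)^{-1}D^k f(\zeta)\|\le \gamma\,\kappa^{k-1}\qquad(k\ge 1),$$
which I obtain by factoring $(n+1)^k R_\omega^{1+k}/(R_\omega^2-\rho_\zeta^2)^{(n+1)/2+k}$ as $\bigl(R_\omega/(R_\omega^2-\rho_\zeta^2)^{(n+1)/2}\bigr)\cdot\bigl((n+1)R_\omega/(R_\omega^2-\rho_\zeta^2)\bigr)^k$ and absorbing the two pieces into $\gamma/\kappa$ and $\kappa$ via the definitions~(\ref{eq_gammafx}) and (\ref{eq_kappafx}). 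This bound drives everything that follows.

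Setting $u:=\kappa\|x-\zeta\|$, I would then Taylor expand $Df(x)$ at $\zeta$ and sum the resulting geometric series $\sum_{k\ge 1}(k+1)u^k=u(2-u)/(1-u)^2$ to get
$$\|I-Df(\zeta)^{-1}Df(x)\|\le \gamma\,\frac{u(2-u)}{(1-u)^2};$$
the Banach perturbation lemma then yields the invertibility of $Df(x)$ together with $\|Df(x)^{-1}Df(\zeta)\|\le (1-u)^2/\psi_\gamma(u)$, where $\psi_\gamma(u):=(1+\gamma)u^2-2(1+\gamma)u+1$. Subtracting the Taylor expansion of $f(x)$ from that of $Df(x)(x-\zeta)$ at $\zeta$ gives the identity
$$Df(x)(x-\zeta)-f(x)=\sum_{j\ge 2}\frac{j-1}{j!}D^jf(\zeta)(x-\zeta)^j,$$
so that
$$N_f(x)-\zeta=Df(x)^{-1}Df(\zeta)\sum_{j\ge 2}\frac{j-1}{j!}Df(\zeta)^{-1}D^jf(\zeta)(x-\zeta)^j.$$
Passing to norms and summing $\sum_{j\ge 2}(j-1)u^{j-1}=u/(1-u)^2$ together with the two bounds above, I would obtain the clean quadratic estimate
$$\|N_f(x)-\zeta\|\le \frac{\gamma u}{\psi_\gamma(u)}\,\|x-\zeta\|.$$

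To close the argument I match the hypothesis to this estimate: the inequality $\gamma u/\psi_\gamma(u)\le 1/2$ rewrites as the non-negativity of the trinomial $(1+\gamma)u^2-2(1+2\gamma)u+1$, whose smallest root is exactly $(2\gamma+1-\sqrt{4\gamma^2+3\gamma})/(\gamma+1)$, the quantity bounding $u$ in the theorem. Under this hypothesis one gets $\|x_{k+1}-\zeta\|\le\frac{1}{2}\|x_k-\zeta\|$; since $\psi_\gamma$ is decreasing on $[0,1]$, iterating produces the quadratic recurrence $u_{k+1}\le \gamma u_k^2/\psi_\gamma(u_0)\le u_k^2/(2u_0)$, and a routine induction then gives $u_k\le (1/2)^{2^k-1}u_0$, which is the announced super-exponential bound. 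The step I expect to be most delicate is the invertibility/perturbation bound, because of the extra factor of $\gamma$ in front of the series (absent in the classical Smale proof, where it is hidden inside $\gamma^{k-1}$): for the argument to close, one has to check that the theorem's root lies strictly inside the positivity range of $\psi_\gamma$, which reduces to the clean identity $(2\gamma+1)^2-(4\gamma^2+3\gamma)=\gamma+1$.
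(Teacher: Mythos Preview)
Your proposal is correct and follows essentially the same route as the paper. Both arguments extract from Proposition~\ref{Berg_F_DFk_w} the key bound $\frac{1}{k!}\|Df(\zeta)^{-1}D^kf(\zeta)\|\le\gamma\,\kappa^{k-1}$, use it to control $\|Df(\zeta)^{-1}Df(x)-I\|$ (hence invertibility of $Df(x)$ via von Neumann) and $\|Df(\zeta)^{-1}(Df(x)(x-\zeta)-f(x))\|$, and combine these into the contraction estimate $\|N_f(x)-\zeta\|\le\dfrac{\gamma u}{\psi_\gamma(u)}\|x-\zeta\|$ with your $\psi_\gamma(u)=(1+\gamma)(1-u)^2-\gamma$; the paper records exactly these three estimates as Proposition~\ref{new_gamma_pp} and then invokes the classical induction scheme of~\cite{BCSS98}, noting that the hypothesis on $u$ is precisely the condition $\gamma u/\psi_\gamma(u)\le\tfrac12$. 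Your intermediate bounds are marginally sharper (you retain a factor $1/\kappa$ that the paper discards via $\kappa\ge1$), but the structure, the trinomial analysis, and the final induction are identical.
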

\begin{proof} We use the proposition~\ref{new_gamma_pp}
below to prove by induction the result. The scheme of the proof is 
classical and can be found
   for instance in ~\cite{BCSS98} page 158.
 The assumption $\dis   u<  \frac{2\gamma+1-\sqrt{ 
4\gamma^2+3\gamma}}{\gamma+1}$  implies that
$\dis   \frac{\gamma u}{(1+\gamma)(1-u)^2-\gamma}\le~\frac{1}{2 }$, that is a 
sufficient  condition for the quadratic convergence of the Newton 
sequence with  ratio $\dis \frac{1}{2}$.
\end{proof}
\begin{pp}\label{new_gamma_pp}
With the notations of the theorem~\ref{new_gamma_tm} we have:
\begin{itemize}
\item[1--]   For all $x$ satisfying $\dis u<
1-\sqrt{\frac{\gamma}{1+\gamma }}$,  $Df(x)$ is
invertible. Moreover we have
$$\dis \|Df(x)^{-1}Df(\zeta)\|\le \frac{(1-u)^2}{(1+\gamma)\, 
(1-u)^2-\gamma}.$$
\\
\item[2--] $\dis\| Df(\zeta)^{-1}\left (Df(x)(x-\zeta)-f(x)\right )\|\le
\frac{\gamma u^2}{(1-u)^2}
$.
\\
\\
\item[3--] $\dis \|N_f(x)-\zeta\|\le \frac{\gamma u^2}{ (1+\gamma)\, 
(1-u)^2-\gamma}.$
\end{itemize}
\end{pp}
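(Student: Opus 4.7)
The plan is to derive the three items in succession; the bulk of the work is a single Smale-style bound on derivatives at $\zeta$. From Proposition~\ref{Berg_F_DFk_w} applied at $x_0=\zeta$ we have, for $k\ge 2$,
$$\frac{1}{k!}\|D^kf(\zeta)\|\le \|f\|\frac{R_\omega}{(R_\omega^2-\rho_\zeta^2)^{(n+1)/2}}\left(\frac{(n+1)R_\omega}{R_\omega^2-\rho_\zeta^2}\right)^k\le \|f\|\frac{R_\omega\,\kappa_\zeta^k}{(R_\omega^2-\rho_\zeta^2)^{(n+1)/2}},$$
using $\kappa_\zeta\ge (n+1)R_\omega/(R_\omega^2-\rho_\zeta^2)$. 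Composing with $Df(\zeta)^{-1}$ and invoking the defining inequality $\gamma\ge \|f\|\,\|Df(\zeta)^{-1}\|R_\omega\kappa_\zeta/(R_\omega^2-\rho_\zeta^2)^{(n+1)/2}$ built into~(\ref{eq_gammafx}), I obtain the key pointwise estimate
$$\frac{1}{k!}\|Df(\zeta)^{-1}D^kf(\zeta)\|\le \gamma\,\kappa_\zeta^{k-1},\qquad k\ge 2.$$

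For item (1), Taylor-expand $Df(x)-Df(\zeta)=\sum_{k\ge 2}\frac{1}{(k-1)!}D^kf(\zeta)(x-\zeta)^{k-1}$, compose with $Df(\zeta)^{-1}$, and apply the key estimate to bound
$$\|Df(\zeta)^{-1}(Df(x)-Df(\zeta))\|\le \gamma\sum_{k\ge 2}k\,u^{k-1}=\gamma\left(\frac{1}{(1-u)^2}-1\right),$$
where $u=\kappa_\zeta\|x-\zeta\|$. This quantity is strictly less than $1$ exactly when $(1+\gamma)(1-u)^2>\gamma$, equivalently $u<1-\sqrt{\gamma/(1+\gamma)}$; under this condition the Banach perturbation lemma applied to $I+Df(\zeta)^{-1}(Df(x)-Df(\zeta))$ yields invertibility of $Df(x)$ and, after simplification of $1/(1-\gamma((1-u)^{-2}-1))$, the stated bound $\|Df(x)^{-1}Df(\zeta)\|\le (1-u)^2/((1+\gamma)(1-u)^2-\gamma)$.

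For item (2), use $f(\zeta)=0$ and Taylor expansion to write $f(x)-Df(\zeta)(x-\zeta)=\sum_{k\ge 2}\frac{1}{k!}D^kf(\zeta)(x-\zeta)^k$; the key estimate then gives
$$\|Df(\zeta)^{-1}(Df(\zeta)(x-\zeta)-f(x))\|\le \gamma\sum_{k\ge 2}\kappa_\zeta^{k-1}\|x-\zeta\|^k=\frac{\gamma\,u^2}{\kappa_\zeta(1-u)}\le \frac{\gamma\,u^2}{(1-u)^2},$$
the last inequality following from $\kappa_\zeta\ge 1$ and $0<1-u<1$. Item (3) is then immediate: writing $N_f(x)-\zeta=Df(x)^{-1}Df(\zeta)\cdot Df(\zeta)^{-1}(Df(x)(x-\zeta)-f(x))$ and multiplying the estimates from (1) and (2), the factor $(1-u)^2$ cancels and one is left with exactly $\gamma u^2/((1+\gamma)(1-u)^2-\gamma)$.

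The only genuine subtlety is the derivation of the key estimate: the Bergman-kernel bound naturally produces the factor $\kappa_\zeta^k$, whereas a Smale-type $\gamma$-theorem argument only consumes $\kappa_\zeta^{k-1}$ in the resulting geometric series. Absorbing the extra factor of $\kappa_\zeta$ into $\gamma$ through the definition~(\ref{eq_gammafx}) is what restores the classical $\gamma$-theorem formulas once the variable is rescaled to $u=\kappa_\zeta\|x-\zeta\|$; everything downstream is standard algebra on the Neumann series.
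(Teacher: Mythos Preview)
Your item~1 is fine and matches the paper's argument. The gap is in item~2: the statement asks for a bound on
\[
\bigl\|Df(\zeta)^{-1}\bigl(Df(x)(x-\zeta)-f(x)\bigr)\bigr\|,
\]
but what you Taylor-expand and bound is $Df(\zeta)^{-1}\bigl(Df(\zeta)(x-\zeta)-f(x)\bigr)$, with $Df(\zeta)$ in place of $Df(x)$. These differ by $Df(\zeta)^{-1}\bigl(Df(x)-Df(\zeta)\bigr)(x-\zeta)$, which is not zero. Since your item~3 factorisation (correctly) involves $Df(\zeta)^{-1}\bigl(Df(x)(x-\zeta)-f(x)\bigr)$, item~3 does not follow from what you actually proved.

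The fix is exactly what the paper does: expand the correct quantity directly. Using $f(\zeta)=0$,
\[
Df(x)(x-\zeta)-f(x)=\sum_{k\ge 2}\frac{k-1}{k!}\,D^kf(\zeta)(x-\zeta)^k,
\]
obtained by subtracting $f(x)=\sum_{k\ge 1}\frac{1}{k!}D^kf(\zeta)(x-\zeta)^k$ from $Df(x)(x-\zeta)=\sum_{k\ge 1}\frac{k}{k!}D^kf(\zeta)(x-\zeta)^k$. Your key estimate (weakened to $\frac{1}{k!}\|Df(\zeta)^{-1}D^kf(\zeta)\|\le\gamma\kappa^{k}$, using $\kappa\ge 1$) then gives
\[
\bigl\|Df(\zeta)^{-1}\bigl(Df(x)(x-\zeta)-f(x)\bigr)\bigr\|\le\gamma\sum_{k\ge 2}(k-1)u^k=\frac{\gamma u^2}{(1-u)^2},
\]
which is the stated bound. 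With this correction, your item~3 goes through exactly as written.
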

\begin{proof}\quad
\\
{\hspace{-5cm}\begin{itemize}
\item[1--]
We write
  $$\dis Df(\zeta)^{-1}Df(x)-I=\sum_{k\ge 1}
\binomial{k+1}{k}Df(\zeta)^{-1}\frac{D^{k+1}f(\zeta)}{(k+1)!}(x-\zeta)^k.$$
Using  proposition~\ref{Berg_F_DFk_w},
 \begin{align*}
  \dis \frac{1}{(k+1)!}\|D^{k+1}f(\zeta)\|\,\|Df(\zeta)^{-1}\|
  &\le \frac{||f||\,\|Df(\zeta)^{-1}\|\,(n+1)^{k+1} \,R_{\omega}^{2+k}}{\left (R_{\omega}^2-\rho_{x_0}^2\right )^{\frac{n+1}{2}+k+1}}
  \\\\
&\le
\frac{\|f\|\,\|Df(\zeta)^{-1}\|\,R_{\omega}\,
\kappa^{k+1}}{\left (R_{\omega}^2-\rho_\zeta^2\right 
)^{\frac{n+1}{2}}}
\\ \\
& \le \gamma\kappa^{k },
\end{align*}
we have
  \begin{align*}
  \|Df(\zeta)^{-1}Df(x)-I\|&\le \gamma \sum_{k\ge 1}
\binomial{k+1}{k}\, \left (\kappa\,||x-\zeta\| \right )^k
\\
&\le
\gamma\left (\frac{1}{(1-u)^2}-1\right )
\end{align*}
with $\dis u=\kappa\|x-\zeta\| .$
 From this point estimate and thanks the classical Von Neumann lemma, see for instance \cite{Kato13} page 30,
 the item 1 follows easily.
\item[2--] We have
$\dis Df(x)(x-\zeta)-f(x)=\sum_{k\ge 2}(k-1)\frac{1}{k!}
D^kf(\zeta)(x-\zeta)^k$. Hence, using more the proposition~\ref{Berg_F_DFk_w}  
we get from a straightforward calculation
\begin{align*}
\| Df(\zeta)^{-1}\left (Df(x)(x-\zeta)-f(x)\right )\|
&\le \gamma
\sum_{k\ge 2}(k-1)\left (\kappa\|x-\zeta\|\right )^k
\\
&\le
\frac{\gamma u^2}{(1-u)^2}.
\end{align*}
This proves the item 2.
\item[3--] We write
$$N_f(x)-\zeta=Df(x)^{-1}Df(\zeta)\, Df(\zeta)^{-1}(Df(x)(x-\zeta)-f(x)).$$
Using the items 1 and 2, we get the result.
\end{itemize}
  }
\end{proof}
  From the theorem~\ref{new_gamma_tm} we can state  :
\begin{tm}\label{NS_gamma_tm}($\gamma$-theorem).
Let $f \in \A2(\omega,R_{\omega})^s$ and $\zeta\in B(\omega, R_{\omega})$ such that
$f(\zeta)=0$. Let  us suppose there exists a index  $\ell$ be such that
\begin{itemize}
\item[1--] For all $0\ge k<\ell$ each element $F_k=K(F_{k-1})$ satisfies $F_k(\zeta)=0$ and $\rank(DF_k(\zeta))~<~n$.
\item[2--] The assumptions of $\gamma$-theorem~\ref{new_gamma_tm}
hold for the system $F_\ell$ at $\zeta$. 
\end{itemize} 
Then, for all $x$ be such that
$$ u:=\kappa\,\|x-\zeta\|\,< \frac{2\gamma+1-\sqrt{ 4\gamma^2+3\gamma}}{\gamma+1}$$ the 
Newton sequence, computed thanks to the Table~\ref{Ndflx0},
$$x_0=x,\quad x_{k+1}=N_{\dfl(f)}(x_k),\quad k\ge 0,$$
   converges quadratically towards $\zeta$.
\end{tm}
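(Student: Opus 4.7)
The plan is to reduce the statement to the regular-case $\gamma$-theorem (Theorem~\ref{new_gamma_tm}) applied to the deflated system $\dfl(f)$. Hypothesis~1 ensures that the deflation sequence $F_0, F_1, \ldots, F_\ell$ is well defined, each $F_k$ vanishes at $\zeta$, and kerneling can be iterated at every step because the rank at $\zeta$ drops strictly (invoking Theorem~\ref{drop_schur}). Hypothesis~2 forces $\zeta$ to be a regular root of $F_\ell$, so $DF_\ell(\zeta)$ has full rank $n$, and by Definition~\ref{dfl_seq} the deflated system $\dfl(f)$ is an $n\times n$ square subsystem of $F_\ell$ whose Jacobian at $\zeta$ is invertible; in particular $\zeta$ is a regular root of $\dfl(f)$.

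First I would observe that, by the very definition of the singular Newton operator, $N_{\dfl(f)}$ is the classical Newton operator associated to the regular square system $\dfl(f)$, so the iteration $x_{k+1}=N_{\dfl(f)}(x_k)$ in the statement is literally the Newton sequence for $\dfl(f)$ started at $x$. Next I would verify that the scalar hypothesis $u = \kappa\,\|x-\zeta\| < (2\gamma+1-\sqrt{4\gamma^2+3\gamma})/(\gamma+1)$ coincides with the starting-point hypothesis of Theorem~\ref{new_gamma_tm} applied to $\dfl(f)$ at $\zeta$, with $\gamma$ and $\kappa$ interpreted as the invariants $\gamma(\dfl(f),\zeta)$ and $\kappa_\zeta$ from (\ref{eq_gammafx}) and (\ref{eq_kappafx}). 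Once these two identifications are in place, Theorem~\ref{new_gamma_tm} directly yields the quadratic estimate $\|x_k-\zeta\|\le (1/2)^{2^k-1}\|x-\zeta\|$, which is the desired conclusion.

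The main obstacle I expect lies in the passage from the $\gamma$-theoretic invariants of the (possibly rectangular) system $F_\ell$ to those of its extracted square subsystem $\dfl(f)$. Concretely, one must compare $\|D\dfl(f)(\zeta)^{-1}\|$ with the pseudo-inverse norm $\|DF_\ell(\zeta)^\dagger\|$ and bound the higher derivatives entering $\gamma$ by restricting the Bergman-kernel estimate of Proposition~\ref{Berg_F_DFk_w} to the retained rows. The row-extraction defining $\dfl(f)$ should be performed so that the minimum singular value of $D\dfl(f)(\zeta)$ stays controlled by $\sigma_n(DF_\ell(\zeta))$; together with the obvious monotonicity $\|\dfl(f)\|\le\|F_\ell\|$ of the $\A2$-norm under row extraction, this transfers hypothesis~2 from $F_\ell$ to $\dfl(f)$ and closes the argument.
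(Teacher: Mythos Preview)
Your reduction to Theorem~\ref{new_gamma_tm} applied to $\dfl(f)$ is exactly the paper's approach: the paper gives no separate proof, introducing the theorem with the sentence ``From the theorem~\ref{new_gamma_tm} we can state'', so it is meant as an immediate corollary.

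Two comments. First, a minor slip: you invoke Theorem~\ref{drop_schur} to say ``the rank at $\zeta$ drops strictly'', but that theorem concerns the \emph{multiplicity}, not the rank. The rank condition $\rank(DF_k(\zeta))<n$ is part of hypothesis~1 itself and needs no justification; Theorem~\ref{drop_schur} is what guarantees the deflation sequence terminates in finitely many steps, which is a separate matter already settled in Section~\ref{Mult-drops-kerneling}.

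Second, your third paragraph manufactures an obstacle the paper does not face. The paper does not try to transfer the invariants from the rectangular system $F_\ell$ to the square subsystem $\dfl(f)$; rather, the constants $\gamma$ and $\kappa$ in the displayed bound on $u$ are to be read directly as $\gamma(\dfl(f),\zeta)$ and $\kappa_\zeta$ (the latter is system-independent anyway, cf.~(\ref{eq_kappafx})). Hypothesis~2, although phrased in terms of $F_\ell$, simply encodes that the deflation has reached full rank so that a square $\dfl(f)$ with invertible Jacobian at $\zeta$ can be extracted and Theorem~\ref{new_gamma_tm} applied to it verbatim. Your proposed comparison of $\|D\dfl(f)(\zeta)^{-1}\|$ with $\|DF_\ell(\zeta)^\dagger\|$ is therefore unnecessary (and, incidentally, would go the wrong way: extracting rows can only increase the inverse norm).
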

Also an existence result of a singular solution follows from the 
theorem~\ref{rouche_regular}.
  \begin{tm}\label{rouche_deflation}
  Let $f \in \A2(\omega,R_{\omega})^s$ and $x_0\in B(\omega,R_{\omega})$. Let us suppose 
that there exists a
  deflation sequence $\dis (F_k)_{0\le k \le \ell}$
of thickness $\ell$ at $x_0$.
More precisely
\begin{itemize}
\item[1--] For all $0\ge k<\ell$ each element $F_k=K(F_{k-1})$ satisfies 
\begin{itemize}
\item[1.1--] $\dis \|F_k(x_0)\|\le  \eta_k:=\frac{2\alpha_0}{(n+1)(n+2)(R_{x_0}+\|F_k\|)R_{x_0}^{n-2}}$.
\item[1.2--] $DF_k(x_0)$ has a $\varepsilon_k$ numerical rank stictly less than $n$ where $\varepsilon_k$ is the $\varepsilon $ number of the line 
$6$ of the Table $1$.
\end{itemize}
\item[2--] The assumptions of $\alpha$-theorem~\ref{rouche_regular}
hold for the system $F_l$ at $x_0$. 
\end{itemize} 
Then     $f$ has only one root in the ball $B(x_0,\theta)$
where $\theta$ is defined in $\alpha-$theorem~\ref{rouche_regular}.
  \end{tm}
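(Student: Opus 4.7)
The plan is to apply the $\alpha$-theorem (Theorem~\ref{rouche_regular}) directly to the terminal system $F_\ell$ of the deflation sequence, and then transfer the resulting uniqueness of roots back to the original system $f$ through the chain of kernelings $F_\ell=K(F_{\ell-1})=K^2(F_{\ell-2})=\cdots=K^\ell(f)$.

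First I would observe that hypothesis~(2) of the theorem is designed to make Theorem~\ref{rouche_regular} applicable: the prescribed bounds on $\alpha(F_\ell,x_0)$, $\beta(F_\ell,x_0)$, $\gamma(F_\ell,x_0)$ and $\kappa_{x_0}$ translate directly into the trinomial inequality appearing there. Hence Theorem~\ref{rouche_regular} supplies a unique root $\zeta$ of $F_\ell$ in the ball $B(x_0,\theta)$.

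Next I would perform a descending induction on $k$ from $\ell$ down to $0$ to show that $\zeta$ remains the unique common zero of $F_k$ in the same ball, and in particular the unique zero of $f=F_0$. The key input is the local equivalence of $F_k$ and $K(F_k)$ exploited in Theorem~\ref{drop_schur}: whenever $DF_k$ has the rank $r$ used in the kerneling, the vanishing of $\schur(DF_k)$ together with $(F_k)_1=\cdots=(F_k)_r=0$ forces the remaining components of $F_k$ to vanish, via Proposition~\ref{schur} and the local isomorphism provided by the implicit function theorem. Hypotheses~(1.1) and~(1.2) are precisely what guarantee, via Theorem~\ref{rank_tm} and Corollary~\ref{cl_evaluation}, that at each intermediate step $k<\ell$ the kerneling operator $K$ is well defined at $F_k$ and that $x_0$ is close enough to an actual zero of $F_k$ for the implicit-function argument to remain valid on $B(x_0,\theta)$.

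The principal obstacle is the $\varepsilon$-free character of the hypotheses: at each intermediate step, $\schur(DF_k)(x_0)=0$ and $F_k(x_0)=0$ hold only approximately, so the equivalence between the zero loci of $F_k$ and of $K(F_k)$ is not literal but must be justified by a Rouch\'e-type perturbation argument combining the numerical-rank control of Theorem~\ref{rank_tm} with the nearby-zero statement of Corollary~\ref{cl_evaluation}. Propagating these local equivalences through all $\ell$ iterations without enlarging the uniqueness radius $\theta$ furnished by the final $\alpha$-theorem is the technical crux of the argument; once carried through, it yields exactly the claim that $f$ has a single root in $B(x_0,\theta)$.
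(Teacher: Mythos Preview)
The paper gives no proof of this theorem: it is introduced by the single sentence ``Also an existence result of a singular solution follows from the theorem~\ref{rouche_regular}'' and then the text passes to the numerical example. So the paper's ``proof'' is just the first move of your proposal --- apply Theorem~\ref{rouche_regular} to the terminal system $F_\ell$ (more precisely to the square system $\dfl(f)$ extracted from it) --- and nothing more. Your outline therefore agrees with the paper on the only step the paper actually supplies.

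Where your proposal goes further, it runs into a real difficulty. In the descending induction you assert that ``the vanishing of $\schur(DF_k)$ together with $(F_k)_1=\cdots=(F_k)_r=0$ forces the remaining components of $F_k$ to vanish''. This is not what Proposition~\ref{schur} or Theorem~\ref{drop_schur} say. The kerneling operator $K$ keeps the first $r$ equations of $F_k$ and \emph{replaces} the remaining $s-r$ equations by the entries of $\schur(DF_k)$; the latter are constraints on the Jacobian of $F_k$, not on the values $(F_k)_{r+1},\ldots,(F_k)_s$ themselves. Proposition~\ref{schur} only tells you that if $\zeta$ is already a root of $F_k$ \emph{and} $\rank DF_k(\zeta)=r$, then $\schur(DF_k)(\zeta)=0$ --- this is the forward implication ``root of $F_k$ $\Rightarrow$ root of $K(F_k)$'', and it is the one used in Theorem~\ref{drop_schur} to show that the multiplicity drops. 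The reverse implication you need for a descending induction (``root of $K(F_k)$ $\Rightarrow$ root of $F_k$'') is simply not available from the algebra of the Schur complement, and the implicit-function argument in the proof of Theorem~\ref{drop_schur} also presupposes that $\zeta$ is a common zero to begin with.

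In short: your first paragraph matches the paper; your second and third paragraphs attempt to fill a gap that the paper leaves open, but the mechanism you invoke for the descent does not do what you claim. If you want to salvage the argument you would need an additional ingredient --- for instance, an a priori assumption that $f$ has an isolated root near $x_0$ (so that the forward direction suffices), or a genuinely different perturbation argument linking the zero sets of $F_k$ and $K(F_k)$ on the ball $B(x_0,\theta)$. The paper does not provide either.
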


\section{Example}
Let us give an example to illustrate the exact and numerical algorithm, 
by considering
$f(x,y)=(f_1(x,y),f_2(x,y))$ with
$$f_1(x,y)=x^3/3+y^2x+x^2+2yx+y^2,\quad 
f_2(x,y)=x^2y-y^2x+x^2+2yx+y^2.$$
The root $(0,0)$ has multiplicity $6$.
\subsection{Exact computations}
   We have
$$ Df(x,y)=\left (\begin{array}{cc}
x^2+y^ 2+2x+2y&2xy+2x+2y\\2xy-y^2+2x+2y&x^2-2xy+2x+2y
\end{array}\right ).$$
The rank of the Jacobian matrix is $0$ at $(0,0)$. Hence kerneling
consists just to add to the input system the gradients of $f_1$ and
$f_2$:
$$F_1 = K(f)=(x^2+y^2+2x+2y, 2xy+2x+2y, 2xy-y^2+2x+2y,
x^2-2xy+2x+2y).$$
The four last lines of Jacobian matrix of $K(f)$ are:
$$\left (\begin{array}{cc}
2x+2&2y+2\\2y+2&2x+2\\
2y+2&2x-2y+2\\
2x-2y+2&-2x+2
\end{array}\right ).$$
The rank at $0,0$ of the matrix $\left (\begin{array}{cc}
2&2\\2&2\\2&2 \\2&2
\end{array}\right )$ is one, as the rank of the Jacobian of $F_1$ at
$(0,0)$.\\
The Schur complement of $DF_1(x,y)$ associated to $2x+2$ is
$$
\schur(DF_1(x,y))=\frac{2}{x+1}\left (\begin{array}{c}
2x-2y+x^2-y^2
\\
2x-3y+x^2-xy-y^2
\\
-x-x^2-xy+y^2.
\end{array}\right )
$$
Then we can easily check  that the system $F_2=(f_1,vec(\schur(DF_1(x,y))$ is a regular system equivalent at $(0,0)$ to $f$.
Let us remark also the truncated system of $F_2$ up to the order $1$ namely
$$(x+y,\, x-y,\, 2x-3y,\, x)$$
 is a regular system equivalent at $(0,0)$ to $f$.
  \subsection{Numerical computations}
We give the behaviour of the deflation sequence.
    \begin{itemize}
       \item[1--] The  initial point                        
$(x_0,y_0)=(-0.01, 0.02)$.
    \item[2--] The system   :
\begin{align*}
f= \left (
  \begin{array}{ccc}
  1/3\,{x}^{3}+{y}^{2}x+{x}^{2}+2\,xy+{y}^{2}
\\
{x}^{2}y-y^2x+{x}^{2}+2\,xy+{y}^{2}
  \end{array}
       \right )
\end{align*}
\item[3--] The ball $B(x_0, R_{x_0}):=B(x_0,1/4)$
  \item[4--]
Truncated expansion series of the system $F_0=f(x+x_0,y+y_0)$ up to the 
order $3$.
\begin{align*}
F_0=\left (
  \begin{array}{ccc} 0.0000957+ 0.0205\,x+ 0.0196\,y+ 0.990\,{x}^{2}+ 
2.04\,xy+ 0.99\,{y}^{2}+ 0.333\,{x}^{3}+{y}^{2}x
  \\
  0.000106+ 0.0205\,y+ 0.0192\,x+ 1.94\,xy+ 1.02\,{x}^{2}+1.01{y}^{2}+{x}^{
2}y-y^2x
  \end{array}
       \right )
\end{align*}
     \item[5--] Evaluation of $F_0$ at $(0,0)$ : $(0.0000956666667, 
0.000106)$.
     \item[6--] We successively have
     $\|F_0\|=8\times 10^{-4}$,
        \\$  \dis \eta=\frac{2\alpha_0}{12(R_{x_0}+\|F_0\|)R_{x_0}^{n-2}}=0.086 ~> 
~\|F_0(0,0)\|~=~0.000106.$
       \item[7--] Jacobian of $F_0$  at $(0,0)$:
       $
   DF_0(0,0)=\left( \begin {array}{cc}  0.02050000000& 0.0196\\ 
\noalign{\medskip}
  0.0196& 0.0205\end {array} \right )
     $.
     The singular values of this jacobian are $0.039$ and $0.0011$.
     This jacobian has  a $\varepsilon_0=0.086$-rank equal to $0$.
    \item[8--] Kerneling of $F_0$ at $(0,0)$ :
\begin{align*}
F_1=K(F_0)=\left (\begin{array}{ccc}
  0.0205+ 1.98\,x+ 2.04\,y+ 1.0\,{x}^{2}+{y}^{2}
  \\0.0196+ 2.04\,x+ 1.98\,y+ 2.0\,xy
  \\0.0192+1.94y+2.04x+2xy-y^2
  \\  0.0205+ 1.94\,x+ 2.02\,y+{x}^{2}-2xy
\end{array}
\right )
\end{align*}
     \item[9--] Evaluation of $F_1$ at $(0,0)$ : $F_1(0,0)=(0.0205, 
0.0196,0.0192, 0.0205)$.
     We have $\|F_1\|=0.1$ and
      \\$\dis \eta=\frac{2\alpha_0}{12(R_{x_0}+\|F_1\|)R_{x_0}^{n-2}}=0.062> 
\|F_1(0,0)\|=0.034.$
        \item[10--]       Jacobian matrix of $F_1$ and its evaluation at 
$(0,0)$:
       $$  DF_1(x,y)=\left( \begin {array}{cc}  1.98+ 2.0\,x& 2.04+2\,y
\\ \noalign{\medskip} 2.04+ 2\,y& 1.98+ 2\,x\\ 
\noalign{\medskip} 2.04+ 2\,y& 1.94+ 2\,x-2y
\\\noalign{\medskip}
  1.94+2\,x-2y& 2.02-2x\end {array} \right )
  \quad\quad
     DF_1(0,0)=\left ( \begin {array}{cc}  1.98& 2.04\\ 
\noalign{\medskip}
  2.04& 1.98\\
   \noalign{\medskip} 2.04& 1.94\\ \noalign{\medskip} 1.94& 2.02\end {array} \right )
     $$
     The singular values of  $DF_1(0,0)$ are $5.6$ and $0.1$ and
  its $\varepsilon_1=0.21$-rank  is one.
\item[11--] Kerneling of $DF_1(x,y)$. We compute the truncated series at 
the order one in $(0,0)$ of each element of the Schur
        complement of $DF_1(x,y)$ associated to
         $1.98+ 2.0\,x$. We obtain
\begin{align*}
F_2:=K(F_1)=
\left(\begin{array}{cccc}
  0.0205+ 1.98\,x+ 2.04\,y
\\- 0.12+ 4.12\,x- 4.12\,y
\\-0.16+4.12x-6.12y
\\ 0.021-2.04x+0.1y
   \end{array}
               \right )
\end{align*}
\item[12--]  Regular system from $F_2$ at $(0,0)$. The singular values 
of $DF_2(0,0)$ are $9.46$ and $3.32$ and $DF_2(0,0)$ has $\varepsilon_2=3.32$-rank equal to $2$.
\item[13--] If we consider
     \begin{align*}
  dfl(f)=
  \left(\begin{array}{cccc}
  0.0205+ 1.98\,x+ 2.04\,y
\\- 0.121+ 4.123\,x- 4.121\,y
\end{array}
  \right )
\end{align*}
we find that the iterate of $(x_0,y_0)=(-0.01,0.02)$ is by the singular 
Newton operator is $(-0.0001017,0.00034)$. This illustrates  the
manifestation of a quadratic convergence.
   \end{itemize}
    We show below  quadratic convergence obtained thanks to the algorithm 
singular Newton.
\begin{align*}
&[- 0.01, 0.02]
\\\\
&
[- 0.00010175, 0.000343]
\\\\
&
[- 1.7\times 10^{-8 },  8.1\times 10^{-8 }]
\\\\
&
[-{ 7.15\times 10^{-16}},{ 4.2\times 10^{-15}}]
\\\\
&
[-{ 1.5\times 10^{-30}},{ 1.06\times 10^{-29}}]
\\\\
&
[-{ 7.9\times 10^{-60}},{ 6.55\times 10^{-59}}]
\\\\
&
[-{ 2.6\times 10^{-118}},{ 2.4\times 10^{-117}}]
\end{align*}
\subsection{Illustration of the theorems~\ref{new_gamma_tm} and
~\ref{rouche_regular}         }
This is given by the table below
  $$
\hspace{-2cm} \begin{array}{|c|c|c|c|c|c|c|c|}
\hline
&&&&&&&\\
&\beta&\kappa&\gamma&\alpha&\scr 
2\gamma+1-\sqrt{(2\gamma+1)^2-1}&\rho_{x_0}=
\frac{2\gamma+1-\sqrt{(2\gamma+1)^2-1}}{2\kappa(\gamma+1)}&
\frac{2\gamma+1-\sqrt{ 4\gamma^2+3\gamma}}{\gamma+1}
\\\hline
(-0.001,0.002) & 0.00019  &6 &13.06&0.0119&0.018&0.00051&
\\
\hline
(0,0)& 0&6&13.012&0 &&&0.0031
\\
\hline
  \end{array}
  $$

\bibliographystyle{acm}
 \bibliography{vc.bib}
 \end{document}